\documentclass[hidelinks,onefignum,onetabnum]{siamart250211}

\usepackage{amssymb}
\usepackage{graphicx}
\usepackage{epstopdf} 
\usepackage{amssymb,latexsym,amsmath}
\usepackage{xcolor}
\usepackage{enumerate}
\usepackage{booktabs}
\usepackage{MnSymbol,bbding,pifont}
\usepackage{comment}
\ifpdf
  \DeclareGraphicsExtensions{.eps,.pdf,.png,.jpg}
\else
  \DeclareGraphicsExtensions{.eps}
\fi
\usepackage{tikz, pgfplots, mathtools}

\def\bigo{{\mathcal O}}
\def\e{{\rm e}}

\usepackage{amsopn}

\usepackage{algorithm}
\usepackage{algpseudocode}


\newsiamremark{remark}{Remark}
\newsiamremark{assum}{Assumption}
\newsiamremark{ex}{Example}

\headers{Nearest singular matrix-valued functions}{M. Gnazzo and N. Guglielmi}

\title{On the numerical approximation of the distance to singularity for matrix-valued functions\thanks{Submitted to the editors on December 20, 2023.
\funding{Nicola Guglielmi acknowledges that his research was supported by funds from the Italian MUR (Ministero dell'Universit\`a e della Ricerca) within the PRIN 2017 Project ``Discontinuous dynamical systems: theory, numerics and applications'' and PRIN 2021 Project ``Advanced numerical methods for time dependent parametric partial differential equations with applications''. The work of Miryam Gnazzo was partially supported 
by the PRIN 2022 Project ``Low-rank Structures and Numerical Methods in Matrix and Tensor Computations and their Application''.}}}

\author{Miryam Gnazzo\thanks{Corresponding author. Department of Mathematics, University of Pisa, Pisa, I-56127 (\email{miryam.gnazzo@dm.unipi.it}).}
\and Nicola Guglielmi\thanks{Division of Mathematics, Gran Sasso Science Institute, L'Aquila, I-67100 (\email{nicola.guglielmi@gssi.it}).}}





\begin{document}

\maketitle

\begin{abstract}
Given a matrix-valued function $\mathcal{F}(\lambda)=\sum_{i=1}^d f_i(\lambda) A_i$, with complex matrices $A_i$ and $f_i(\lambda)$ entire functions for $i=1,\ldots,d$, we discuss a method for the numerical approximation of the distance to singularity of $\mathcal{F}(\lambda)$. The closest singular matrix-valued function $\widetilde{\mathcal{F}}(\lambda)$ with respect to the Frobenius norm is approximated using an iterative method. The property of singularity on the matrix-valued function is translated into a numerical constraint for a suitable minimization problem. Unlike the case of matrix polynomials, in the general setting of matrix-valued functions the main issue is that the function $\det ( \widetilde{\mathcal{F}}(\lambda) )$ may have an infinite number of roots. An important feature of the numerical method consists in the possibility of addressing different structures, such as sparsity patterns induced by the matrix coefficients, in which case the search of the closest singular function is restricted to the class of functions preserving the structure of the matrices.
\end{abstract}

\begin{keywords}
Singular matrix-valued functions; matrix nearness; gradient flow; matrix ODEs; approximation of analytic functions; delay differential equations.
\end{keywords}

\begin{MSCcodes}
65F99, 15A18, 47A56, 65K05.
\end{MSCcodes}

\section{Introduction}
Nonlinear matrix-valued functions and the eigenvalue problems associated with them may arise in several scientific contexts. For instance, we find them in the areas of acoustic, fluid mechanics and control theory. In a general framework, given a subset $\Omega \subseteq \mathbb{C}$, a matrix-valued function is a map $F: \Omega \mapsto \mathbb{C}^{n \times n}$. A classical problem consists in the computation of the eigenvalues and the associated eigenvectors, which is usually denoted by nonlinear eigenvalue problem (NEP). There exist several techniques for the resolution of the NEP, such as generalized Newton's method or linearization approaches. A complete survey on this class of functions and a detailed explanation of the methods for the computation of their eigenvalues and eigenvectors may be found in \cite{GutTis}. Nevertheless, the construction of a class of linearization in the sense of \cite{DopicoZaballa} is not possible for each kind of matrix-valued function, due to the different classes of nonlinearities. Therefore, the NEP may not always be reduced into a linear one. For the specific case of polynomial nonlinearities, the problem reduces to consider matrix polynomials of degree $d$, that is:
\begin{equation}
\label{eq:matrix_polynomial}
    P(\lambda)= \sum_{i=0}^d \lambda^i A_i,
\end{equation}
where $A_i \in \mathbb{C}^{n \times n}$, for $i=0,\ldots,d$. 
Another interesting class of nonlinear functions is given by the so called quasipolynomials,
\begin{equation}
\label{eq:m-valued_delays}
    F(\lambda)=\sum_{i=0}^{d_1} \lambda^i A_i + \sum_{j=1}^{d_2} e^{-\tau_j \lambda} B_j,
\end{equation}
with $A_i, B_j \in \mathbb{C}^{n \times n}$, for $i=0,\ldots,d_1$ and $j=1,\ldots,d_2$. 
In particular setting $d_1=1$, with $A_1$ a (possibly singular) matrix, the NEP reduces to solving
\begin{equation*}
    \det \left( F(\lambda) \right)= \det \left( -\lambda A_1 + A_0 + e^{-\tau_1\lambda}B_1 + \ldots + e^{-\tau_{d_2}\lambda} B_{d_2} \right)=0,
\end{equation*}
which corresponds to the general form of the characteristic equation for delay differential algebraic equations with discrete constant delays $\tau_1,\ldots,\tau_{d_2}$ (DDAEs). The eigenstructure of the matrix-valued functions in \eqref{eq:m-valued_delays} is a crucial tool in the solvability of both differential delay algebraic equation systems (DDAEs) and delay differential equations (DDEs). For a detailed overview on the problem, see for instance \cite{MichNiculescu}. In this context, it is important to avoid working with a singular matrix-valued function, such that $\det \left( F(\lambda)\right)$ is identically equal to zero for each $\lambda \in \mathbb{C}$, or also a function which is very close to being singular. For this reason, it would be important to have a method able to approximate the \emph{distance to singularity}. More in detail, given a regular matrix-valued function $F(\lambda)$, such that $\det \left( F(\lambda) \right)$ is not identically zero, we are interested in numerically approximating the nearest matrix-valued function $F(\lambda) + \Delta F(\lambda)$, such that 
\begin{equation*}
    \det \left( F(\lambda) + \Delta F(\lambda) \right) \equiv 0.
\end{equation*}

The computation of an accurate approximation for the distance to singularity of a matrix-valued function has been topic of discussion for many years. The majority of the results are stated for the case of matrix pencils. The first theoretical results for the distance to singularity are proposed in \cite{ByersHeMehr}, where several upper and lower bounds are provided for the case of matrix pencils, but an explicit solution is missing. Very recently, a method that employs a Riemannian optimization framework on the generalized Schur form of pencils has been proposed in \cite{DopicoNoferiniNyman}. An extension of the problem to matrix polynomials of degree greater than $1$ has been proposed in \cite{DasBora}, where the authors prove a characterization of the problem in terms of the rank deficiency of certain convolution matrices associated with the matrix polynomial. Finally, an iterative algorithm based on structured perturbations of block Toeplitz matrices containing the coefficients of the matrix polynomial has been introduced in \cite{GiesHaral}.

Our method for general matrix-valued functions is connected with the work proposed in \cite{GugLubMeh} for matrix pencil and in \cite{GnazzoGugl} for matrix polynomials. A major difficulty is due to the presence of nonlinearities in the matrix-valued function, which represents a delicate point of the problem, since a general matrix-valued function may have an infinite number of eigenvalues. This feature prevents the applicability of the method for matrix polynomials presented in \cite{GnazzoGugl}. The technique for the approximation of the distance to singularity that we present in this article can be adapted to several classes of matrix-valued functions, preserving the advantage of the approach in \cite{GnazzoGugl}, which consists of the possibility to extend the approach to different kinds of structures. 

Similarly to \cite{GugLubMeh}, we propose an ODE-based method, making use of a two level iterative procedure. The most delicate part of the method consists in providing an adequate reformulation of the condition $\det \left(F(\lambda) + \Delta F(\lambda) \right) \equiv 0$, taking into account that it may present an infinite number of zeros in $\mathbb{C}$.

The paper is organized as follows. In Section \ref{sec:Movitating example}, we introduce a motivating example arising in delay differential equations, which stresses the importance of having a method capable to detect the numerical singularity, with particular emphasis on the presence of small delay $\tau$, and, moreover, in Subsection \ref{subsec:overview}, we illustrate the overview of our contribution. Section \ref{sec:problem setting} provides the formulation of the problem. The main notions and definitions are introduced in this section, together with the rephrasing of the problem into an optimization one, and a few results provided by \cite{AustinTref2014} and \cite{TrefWeid} on recall. In Section \ref{sec:Two-level approach}, we propose a new method for the numerical approximation of the distance to singularity for general entire matrix-valued functions. In Section \ref{sec:Extension to structured}, we extend the method to structured matrix-valued functions and specialize the results provided in Section \ref{sec:Two-level approach}. Since the class of matrix polynomials is included in the general class we study, Section \ref{sec:comparison_matrix_pol} provides a new point of view, potentially more efficient, on the computation of the distance to singularity for the polynomial case, with a comparison with the approach in \cite{GnazzoGugl}. A careful analysis of the computational issues connected with the numerical implementation of the new method is provided in Section \ref{sec:Computational issues}, and finally a few numerical examples are provided in Section \ref{sec:Numerical examples matrix valued}.

\section{A motivating example and overview of the contribution}
\label{sec:Movitating example}

In this Section we present an example
from the stability analysis of linear systems of delay differential equations with constant delay.

We consider here initial value problems of delay differential equations:
\begin{equation}\label{eq:dde}
\begin{array}{rcl}
E\, y'(t) &\! = \!& A y(t) + B y(t-\tau) \quad \mbox{for} \quad t \ge 0  \\[2mm]
y(t) & = & g(t) \quad \mbox{for}~~ t \le 0 ,
\end{array}
\end{equation}
where $E, A, B$ are constant $d\times d$ matrices, and the delay $\tau > 0$ is constant.

As an illustrative example, we consider the test problem with  
\begin{align*} \label{eq:linprob}
E & = \begin{pmatrix} 0 &  0 \cr  1 & 1 \end{pmatrix} \qquad
A = \begin{pmatrix} -1 &   \frac12 \cr  0 & -1 \end{pmatrix} \qquad 
B = \begin{pmatrix}  1 &  -\frac12 \cr  0 & \frac12 \end{pmatrix} 
\end{align*}
with initial data (for $t \le 0$)
\begin{align*}
g_1(t) & = \cos(\pi t), \qquad
g_2(t) = 2 - 4 t^2. 
\end{align*}
As for the delay we consider two cases:
\begin{itemize}
\item[(a) ] $\tau=1$; \hskip 1cm (b)  $\tau=10^{-5}$
\end{itemize}
i.e. a constant delay with moderate size and a small delay.

Note that $E$ is singular which implies that the system has differential-algebraic form.
At $t=0$ the first (algebraic) equation yields
\begin{equation} \label{eq:alg}
{\rm eq}(\tau) = -y_1(0) + \frac12 y_2(0) + y_1(-\tau) -\frac12 y_2(-\tau) 
\end{equation}
which is satisfied exactly for $\tau=1$ and to second order 
for small delay $\tau$,
\[
{\rm eq}(\tau) = \left(2-\frac{\pi ^2}{2}\right) \tau ^2+ \bigo \left(\tau ^3\right)
\]
that is with a tiny error, for $\tau=10^{-5}$.

A crucial point, when $E$ is singular, is the well-posedness of the problem, which
relies on the application of the implicit function theorem.
This is equivalent to ask that the matrix pencil $(E,A)$ is regular, which is easy
to check and which is robust with respect to sufficiently perturbations of $A$ and $E$.

We integrate the problem numerically in the two situations (a) and (b) and consider
relative perturbations of size $\bigo(10^{-6})$ on the first entry of $A$ and $B$, 
that is $a_{11}$ and $b_{11}$.
For this collocation methods based on Radau nodes can be successfully applied to stiff delay
differential equations (see \cite{GH01},\cite{GH08}), which are the basis of the code Radar5 
solving stiff and implicit problems.
The results are the following. We consider for example the randomly chosen perturbed entries
\[
\widetilde a_{11} = a_{11} + \delta_1, \qquad \delta_1 =  2\,\cdot 10^{-6}, \qquad
\widetilde b_{11} = b_{11} + \delta_2, \qquad \delta_2 =  2\,\cdot 10^{-6}
\]
We denote by $\widetilde y$ the solution of the perturbed problem.
In the first case ($\tau=1$) we observe that the problem is well conditioned; 
we plot the solution of the original problem (left picture) and the error 
$\mathrm{err}(t) = y(t) - \widetilde y(t)$ (right picture) in Figure \ref{fig:1}.
\begin{figure}[h!]
    \centerline{
    \includegraphics[scale=0.36]{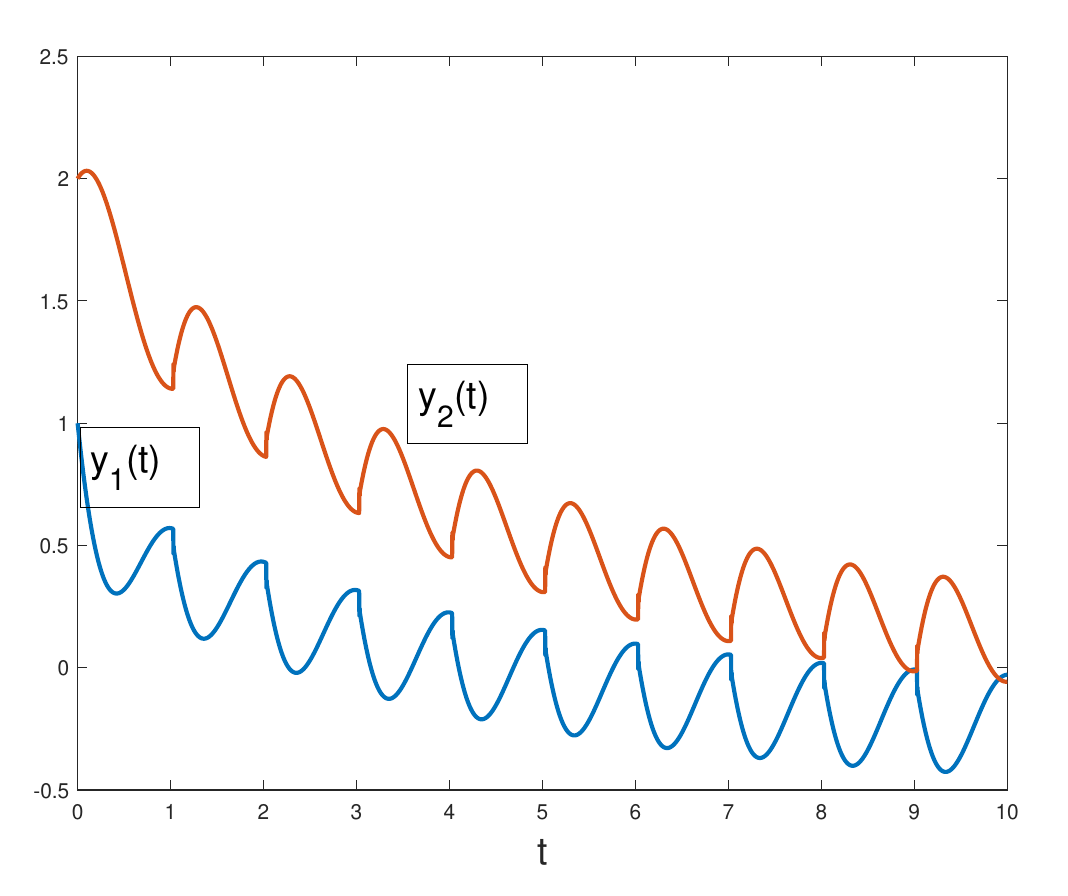} \hskip 1mm 
    \includegraphics[scale=0.36]{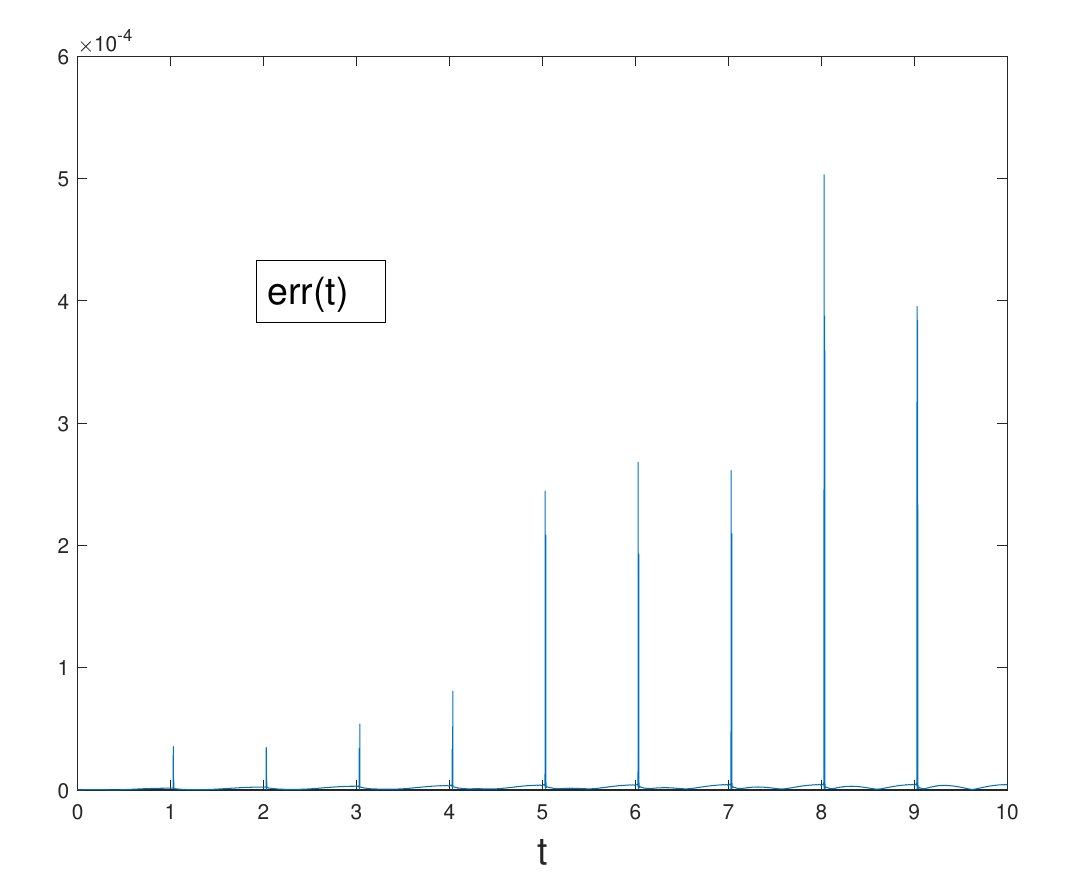}}
    \caption{Plot of the solution of the original problem (left) and error wrt perturbed problem (right) 
    in the case of delay $\tau=1$.}
    \label{fig:1}
\end{figure}
The error at the final point is ${\rm err} = 7.8\,\cdot 10^{-6}$, while the peaks in correspondence
of breaking points $\xi_k = k\,\tau$ have order $\bigo \left( 10^{-4} \right)$.
In the second case ($\tau=10^{-5}$) we observe instead a severe ill conditioning; we plot the solution 
of the original problem $y$ and the solution of the perturbed problem $\widetilde y$ in Figure \ref{fig:2}. 
It is evident that the problem is extremely sensitive to tiny perturbations.
\begin{figure}[h!]
    \centerline{
    \includegraphics[scale=0.36]{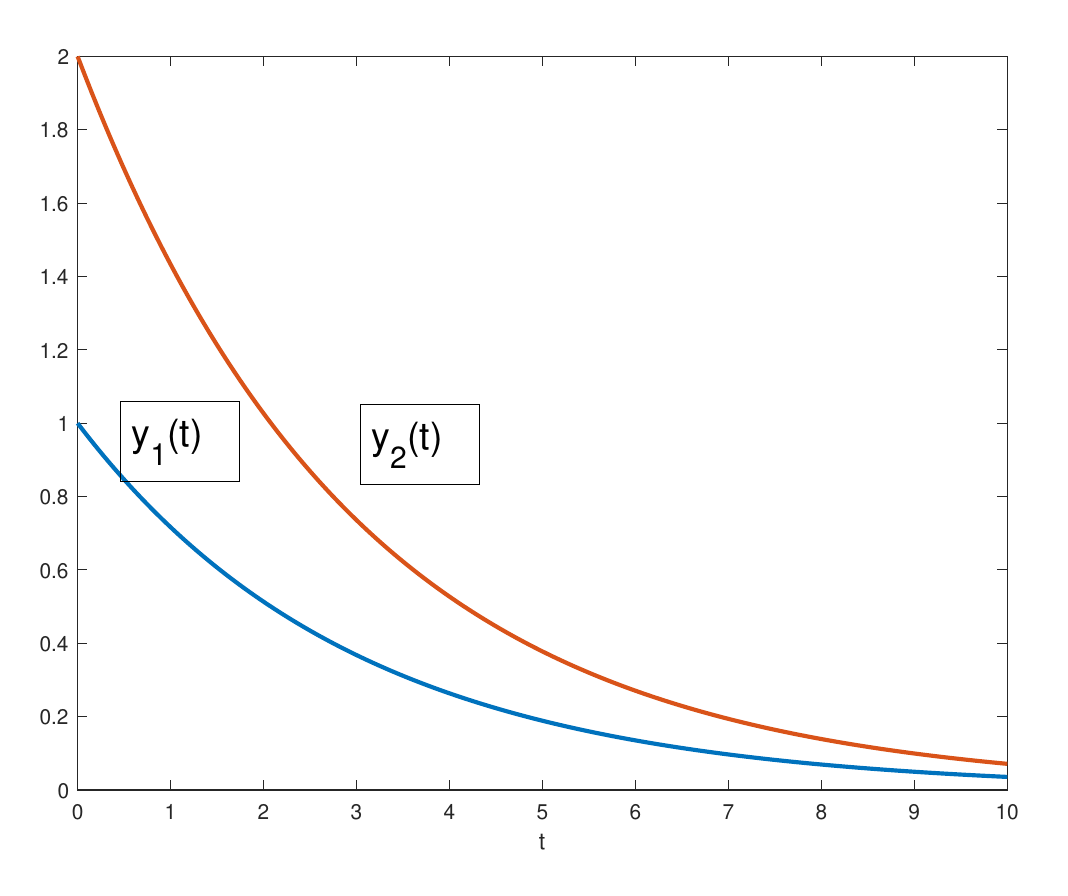} \hskip 1mm 
    \includegraphics[scale=0.36]{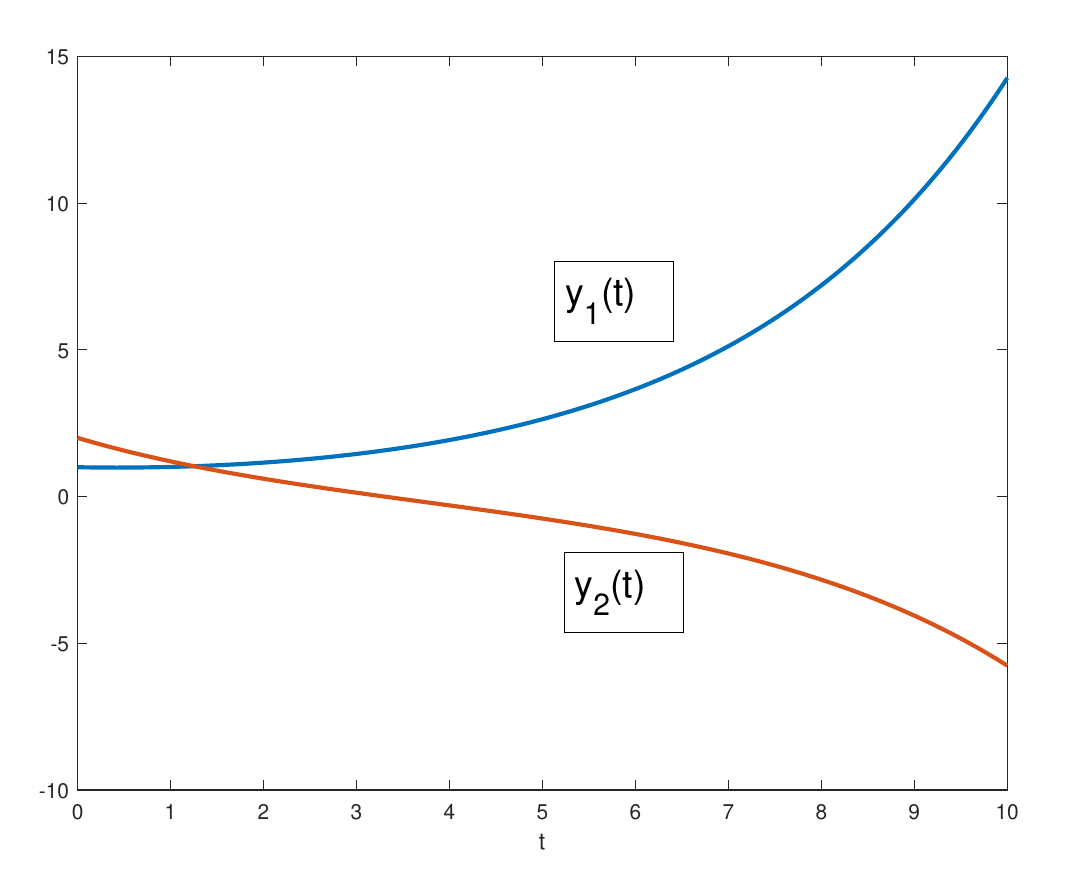}}
    \caption{Plot of the solution of the original problem (left) and perturbed problem (right) 
    in the case of small delay.}
    \label{fig:2}
\end{figure}

In the case of delay $\tau=1$ we observe that the relative error
\[
\varrho = \frac{\| y - \widetilde y\|}{\| \delta \|}, \qquad \delta = \left( \delta_1 \quad \delta_2 \right)^\top
\]
remains nicely bounded. On the contrary, when $\tau=10^{-5}$, the error gets extremely large.

\subsubsection*{Interpretation} Since the pencil is robustly regular, the apparent ill conditioning
of the problem has to be determined by the delay, which is the only difference in the two considered
examples. We let
\[
F(\lambda;\tau) = \det\left( \lambda E - A - B \e^{-\lambda \tau} \right) =
-\frac{3}{2} \lambda \e^{-\tau  \lambda}+\frac{1}{2} \e^{-2 \tau  \lambda}-
 \frac{3 \e^{-\tau  \lambda}}{2}+\frac{3 \lambda}{2}+1.
\]
For given $\tau$, the roots of the function $F(\lambda;\tau)$ are given by $\lambda_0 = 0$ and
the other roots having negative real part, through the Lambert $W$-function. For a complete survey on the Lambert $W$-function, we refer the reader to the work by Corless et al. \cite{Corless}.
It is important to 
notice that the system is stable.
\begin{figure}[h!]
    \centerline{
    \includegraphics[scale=0.65]{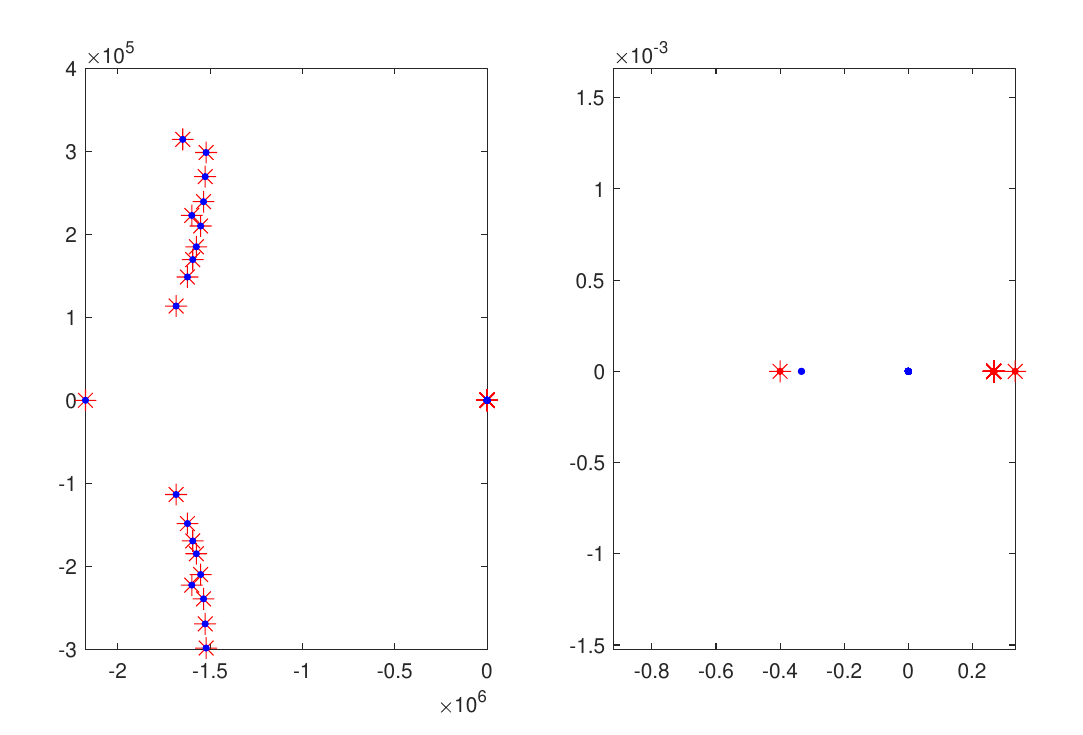}}
    \caption{Plot of the eigenvalues of the original problem (blue points) and perturbed problem (red asterisks) 
    in the case of small delay $\tau$. Right picture: zoom of the rightmost eigenvalues.}
    \label{fig:3}
\end{figure}
Algebraic manipulation provides expression for the roots in closed form. They are given by $\lambda=0$
and 
\begin{eqnarray*} \nonumber
& \lambda_k     = \displaystyle \frac{1}{\tau} W_{k} \left(\frac{\tau}{3} 
          \e^{\frac{2\tau}{3}}\right)-\frac{2}{3} & \text{ if } k \in \mathbb{Z},
\end{eqnarray*}
where $W_k(z)$ is the $k$-th branch of the Lambert $W$-function.

For $\tau=1$, the rightmost real roots are $\lambda=0$ and 
\[
\lambda_0=\frac{1}{3} \left(3 W\left(\frac{e^{2/3}}{3}\right)-2\right) \approx -0.242048,
\]
being $W(z)$ the principal branch of the Lambert $W$-function.

For $\tau=10^{-5}$, the rightmost real roots are $\lambda=0$ and $\lambda_0 \approx -0.333332$.

Note that as soon as we perturb the first entry of matrices $A$ and $B$, it is not possible
anymore to compute the roots of the characteristic equation in closed form through the Lambert
$W$-function.
In this case we make use of the algorithm developed in  \cite{breda2023practical} for the computation of the rightmost roots. 
The results for the case $\tau=10^{-5}$ are shown in Figure \ref{fig:3}.
For the case with $\tau=10^{-5}$, the rightmost root of the perturbed system is $\widetilde \lambda_0 \approx 0.3333\ldots$, and has originated by a perturbation of the eigenvalue $\lambda_0=0$ of the original problem, after a perturbation of magnitude $10^{-6}$. This explains the high ill-conditioning of the problem.
Note that when the delay is $\tau=1$, the perturbation of the rightmost eigenvalue has the same order of the
perturbation. In fact in this case $\widetilde \lambda_0 \approx 4 \cdot 10^{-6}$.

Indeed, when $\tau = 10^{-5}$, one can check directly that
\[
|F\left( \lambda; \tau  \right)| \ll 1 
\]
in a very large region of the complex plane and the eigenvalue problem
appears {\em numerically singular}.
This means that the eigenvalue problem is very close to being singular.
As a consequence, a very tiny perturbation as the one we considered previously in the
example, is able to move the roots significantly and even make the problem unstable, as
it appears in Figure \ref{fig:2} (right picture).
Indeed for $\lambda$ such that $|\lambda \tau| \ll 1$, we have 
\begin{equation} \nonumber
A + B \e^{-\lambda \tau} \approx A + B = \left(
\begin{array}{cc}
 0 & 0 \\
 0 & -\frac{1}{2} \\
\end{array}
\right) 
\end{equation}
and the pair $(E,A+B)$ turns out to be singular. This explains the observed instability.

\subsection{Overview of the contribution}
\label{subsec:overview}

The problem we deal with has not been previously investigated in the literature except
for the case of matrix polynomials. 
The main idea in the article is the following. Given an analytic matrix valued function $F(\lambda)$, 
its singularity is determined by the property that the function 
$f(\lambda) = \det\left(F(\lambda)\right)$  vanishes 
on a closed curve (which is guaranteed by the maximum modulus Theorem). Then, considering an interpolating polynomial for the scalar function $f(\lambda)$ we
are able to get error estimate for $f(\lambda)$ restricted to the curve.

Considering in particular a circle $\mathcal{C}$, in our approach we replace  the condition   
\[
f(\lambda) = 0 \qquad \forall \lambda \in \mathcal{C}
\]
with the weaker interpolation condition 
\[
f(\lambda_k) = 0 \qquad \forall k=1,\ldots,m
\]
at $m$ equidistant points $\{\lambda_k\}$ on the circle $\mathcal{C}$.
For this we are able to exploit the well-known spectral convergence property of the interpolating polynomial for a holomorphic function on a disk, which allows us to make use of a relatively small number $m$ of support points for the interpolation.
In the case when $F$ is a matrix polynomial of degree $n$ ($f$ would be a polynomial of degree $d n$ with $d$ the size of the matrices), the error would be guaranteed to be zero if the number of interpolation points $m > dn$ (as a consequence 
of the fundamental theorem of algebra).
This was the basis of the approach proposed in \cite{GnazzoGugl}.

The problem we are considering here might be tackled differently, approximating the matrix valued function $F$ by a matrix polynomial, with the choice of the points and of the degree to be computed by some suitable criterion, which would be the most delicate part of the method. 
Consider for example the function considered in previous section, $F(\lambda) = \lambda E - A - B \e^{-\lambda}$ with $A,B,E$ given $d \times d$ matrices.
When checking the singularity of $F$,
\[
F(\lambda) = \lambda E - A - B \e^{-\lambda}
\]
we may interpolate the exponential function and replace $F$ by
\[
P(\lambda) = \lambda E - A - B q_n(\lambda) =
( - A + q_{n,0} B) + (E - q_{b,1} B) \lambda - q_{n,2} B \lambda^2 - \ldots - q_{n,n} B \lambda^n
\]
with $q_n(\lambda)$ either the Taylor polynomial or 
an interpolation polynomial of $\e^{-\lambda}$ of degree $n$. 
Using the approach proposed in \cite{GnazzoGugl} we should set the number
of interpolation points at least  $n d +1$; however at such points $P$
would differ from $F$ because the interpolation condition imposed to $q_n$ 
would not guarantee the condition $F(\lambda_k) = P(\lambda_k)$ for $k=1,\ldots nd +1$
so that we cannot replace the problem by an equivalent one for a matrix polynomial. 
Moreover, in the computation of the distance to singularity we should solve a constrained 
optimization problem because all terms of the polynomial $q_m$ would be multiplied by 
the same matrix $B + \Delta B$ (being $\Delta B$ a suitable perturbation of $B$). 
For such an approach the choice of the degree $n$ (and also on the number of support points 
$m$) does not seem easily solvable a priori.

In our perspective, instead, there is no need of replacing $e^{-\lambda}$ by a polynomial and the choice
of $m$ is done by computing estimates of the remainder of the power series of $f$, by suitably approximating 
the associated  Cauchy integral, as we will explain in Section \ref{sec:Computational issues}. 
For this we believe that the proposed approach is effective, as shown by the numerical experiments we provide 
in Section \ref{sec:Numerical examples matrix valued}.

As an important by-product, for the special case when $F$ is a matrix polynomial, we have also improved the 
methodology proposed in \cite{GnazzoGugl} (where $m = nd+1$ was chosen according to the fundamental theorem of 
algebra). In Subsection \ref{subsec:special case_matrixpol}, we have considered a few examples and observed for the new
method a significant improvement in terms of number of interpolation points.

Finally we remark that in practice - for numerical convenience - our method replaces 
$f(\lambda)$ by $\sigma_{\min} \left(F(\lambda)\right)$.

\section{Problem setting}
\label{sec:problem setting}
We use the following notation: given two rectangular $A,B \in \mathbb{C}^{n_1 \times n_2}$, we denote by
\begin{equation*}
    \left\langle A, B \right\rangle = \mbox{Re} \left( \mbox{trace}\left( A^H B \right) \right)
\end{equation*} 
the real Frobenius inner product on $\mathbb{C}^{n_1 \times n_2}$. The associated Frobenius norm is
\begin{equation*}
    \| A \|_F= \left( \sum_{i=1}^{n_1} \sum_{j=1}^{n_2} \left| A_{i,j}\right| ^2 \right)^{\frac{1}{2}}.
\end{equation*}
Consider the matrix-valued function
\begin{equation}
\label{eq:matrix-valued function}
    \mathcal{F}\left( \lambda \right)= f_d\left(\lambda \right) A_d+ \ldots + f_1\left(\lambda\right) A_1,
\end{equation}
where $A_i \in \mathbb{C}^{n \times n}$, for $i=1,\ldots,d$ and the functions $f_i: \mathbb{C} \mapsto \mathbb{C}$ are entire, i.e. analytic on the whole complex plane $\mathbb{C}$, for all $i=1,\ldots,d$. We implicitly assume that $f_d\left(\lambda\right) \not\equiv 0$ and $A_d \neq 0$.

\begin{remark}
    In this work, we restrict the description to matrix-valued functions that are already given in split form \eqref{eq:matrix-valued function}. In most of the applications, indeed, this is the usual representation, see for instance the examples proposed in the collection \cite{NLEVP}. Moreover, the coefficient matrices $F_j$ often represent specific features of a problem, such as the mass matrix or the stiffness matrix. Then, it would be desirable to perform the method we propose using the same framework. It is worth noticing that it may happen to have two (or more) analytic functions $f_i$ that are linear dependent. In this situation, it seems convenient to rewrite the matrix-valued function into a basis where the $f_i$ are linear independent, before applying the method we propose.
\end{remark}

We denote by 
\begin{equation}  \label{eq:norm}
    \| \mathcal{F} \|_F:= \| \left[ A_d, \ldots, A_1 \right] \|_F.
\end{equation}

The function $\mathcal{F}\left( \lambda \right)$ is called regular if
\begin{equation*}
    \det \left( \mathcal{F} \left( \lambda \right) \right) \not \equiv 0,
\end{equation*}
otherwise it is called singular. As we have mentioned, an example of this framework is the class of characteristic functions associated to linear systems of delay differential algebraic equations with constant delays. For example
\begin{equation*}
    A_2 \dot{x}(t) + A_1 x(t-\tau) + A_0 =0, \quad A_i \in \mathbb{C}^{n \times n}, \; \mbox{for} \; i=0,1,2.
\end{equation*}
The problem that we consider is stated as follows.

\begin{definition}
Given a regular matrix-valued function $\mathcal{F}\left( \lambda \right)= \sum_{i=1}^d f_i\left(\lambda \right)A_i$, we define the \emph{distance to singularity} as 
\begin{equation*}
    d_{{\rm{sing}}} \left( \mathcal{F} \right)=\min \left\lbrace \|\Delta \mathcal{F} \|_F : \left( \mathcal{F} + \Delta\mathcal{F} \right) \left( \lambda \right) \; {\rm{is\;singular}}
 \right\rbrace.
\end{equation*}
Here we denote by $\Delta\mathcal{F}$ the perturbation
\begin{equation*}
     \Delta\mathcal{F}\left( \lambda \right)= f_d \left( \lambda \right) \Delta A_d + \ldots + f_1 \left( \lambda \right) \Delta A_1,
\end{equation*}
where the coefficient matrices $\Delta A_i \in \mathbb{C}^{n \times n}$, for $i=1,\ldots,d$.
\end{definition}

\begin{remark}
    Since we are not imposing additional structures, the set of perturbations $\Delta \mathcal{F}$ such that the perturbed matrix-valued function $\mathcal{F} + \Delta \mathcal{F}$ is singular is non-empty. Indeed, we can always choose $\Delta A_i = - A_i$ for $i=1,\ldots,d$. In situations where we look for the structured distance to singularity (as explained in Section \ref{sec:Extension to structured}), we assume that the set where we are minimizing is non-empty.

    This observation is also useful in order to prove that the infimum is a minimum. Indeed, the feasible set consists in the intersection of the set $\mathcal{T}_1:=\left\lbrace \Delta \mathcal{F}(\lambda) : \right.$ $\left.\det \left( \mathcal{F}(\lambda) + \Delta \mathcal{F}(\lambda) \right) = 0, \; \forall \lambda \in \mathbb{C} \right\rbrace$ and the set $\mathcal{T}_2:= \left\lbrace \Delta \mathcal{F}(\lambda) : \| \Delta \mathcal{F} \|_F \leq \|\mathcal{F} \|_F  \right\rbrace$. The set $\mathcal{T}_1$ is closed, while the set $\mathcal{T}_2$ is compact. Then, the intersection $\mathcal{T}_1 \cap \mathcal{T}_2$ is compact. Since the Frobenius norm is a continuous function, the infimum is a minimum by the Weierstrass theorem. Note that in the subsequent Definition \ref{def:structured_distance_matrix_functions}, the proof that the infimum is indeed a minimum can be addressed with the same argument, since we are considering as structures only finite dimensional linear subspaces.
\end{remark}

The problem consists in finding the smallest perturbation $\Delta\mathcal{F}$, in the sense of the Frobenius norm
\eqref{eq:norm}, such that
\begin{equation}
\label{eq:f+deltaf singular function}
    f\left( \lambda \right):= \det \left( \mathcal{F} \left( \lambda \right) +  \Delta\mathcal{F} \left( \lambda \right) \right) \equiv 0.
\end{equation}
Each entry of the matrix-valued function $\mathcal{F}(\lambda) + \Delta \mathcal{F}(\lambda)$ is a combination of the scalar analytic functions $f_1(\lambda), \ldots, f_d(\lambda)$. Thus, the determinant $f(\lambda)$ in \eqref{eq:f+deltaf singular function} is analytic.

\subsection{Reformulation of the problem}
\label{subsec:Reformulation}

Our approach is based on rephrasing the problem of computing the nearest singular matrix-valued function in the form $\mathcal{F}\left( \lambda \right) + \Delta \mathcal{F}\left(\lambda \right)$ into a suitable optimization problem. In order to solve this optimization problem:
\begin{align}
\label{eq:opt_prob_contin}
    \Delta \mathcal{F}^*:= \left[ \Delta A_d^*,\ldots, \Delta A_1^* \right] =& \arg \min_{\Delta \mathcal{F}} \| \Delta \mathcal{F} \|_F, \\
    & \mbox{subj. to} \; f(\lambda)\equiv 0, \notag
\end{align}
we need to replace the constraint $f(\lambda) \equiv 0$ by a discrete setup. In the case of matrix polynomials in the form $\mathcal{F}\left(\lambda\right) = \lambda^{d-1} A_d + \ldots + A_1$, a natural idea comes from the observation that the determinant of a matrix polynomial of degree $d-1$ and size $n$ is a scalar polynomial of degree at most $n \left(d-1\right)$ in the variable $\lambda$. Then, the application of the fundamental theorem of algebra leads to the equivalent condition
\begin{equation*}
    \det \left( \mathcal{F} \left( \mu_j \right) + \Delta \mathcal{F}\left( \mu_j \right) \right)=0,
\end{equation*}
for a prescribed set of distinct complex points $\left\lbrace \mu_j \right\rbrace$, for $j=1, \ldots, n\left(d-1\right) +1$ (for more details, see \cite{GnazzoGugl}). Unfortunately, this approach does not work for the case of general matrix-valued functions. As an illustrative example, the entire function $\det ( \lambda A_2 + \allowbreak e^{-\lambda} A_1 + A_0 )$ does not present a finite number of zeros and the general problem we consider may produce an infinite number of eigenvalues.

In order to replace the condition
\begin{equation*}
\det \left( \mathcal{F}\left( \lambda \right) + \Delta \mathcal{F}\left( \lambda \right) \right) \equiv 0
\end{equation*}
with a more manageable one, we recall a few classical results for holomorphic functions, starting with the following result, which is a consequence of the maximum modulus principle (see for example \cite{Cartan}).

\begin{theorem}
\label{thm:corollary_max_modulus}
Consider $D$ a bounded non empty open subset of $\mathbb{C}$ and $\bar{D}$ the closure of $D$. Suppose that $f: \bar{D} \mapsto \mathbb{C}$ is a continuous function and holomorphic on $D$. Then $\left| f\left( \lambda \right) \right|$ attains a maximum at some points of the boundary of $D$.
\end{theorem}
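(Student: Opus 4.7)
The plan is to combine a compactness argument with the classical maximum modulus principle for holomorphic functions. First I would observe that since $D$ is bounded, the closure $\bar D$ is compact in $\mathbb{C}$, and since $f$ is continuous on $\bar D$, so is the real-valued map $\lambda \mapsto |f(\lambda)|$. By the extreme value theorem, $|f|$ therefore attains its maximum $M$ at some point $\lambda^{\star} \in \bar D$. If $\lambda^{\star} \in \partial D$, the conclusion is immediate and there is nothing more to prove.

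The remaining case is $\lambda^{\star} \in D$, i.e.\ the maximum is attained at an interior point. Here I would appeal to the maximum modulus principle in its local form: if $f$ is holomorphic on an open set and $|f|$ has a local maximum at an interior point, then $f$ is constant on the connected component of that open set containing the point. This is standard and can be derived, for instance, from the open mapping theorem (a non-constant holomorphic map is open, hence cannot send an interior point to a boundary point of its image), or alternatively from the mean value property applied to a small disk around $\lambda^{\star}$. Applying this to our $f$ on $D$, we conclude that $f$ is identically equal to some constant $c$ with $|c|=M$ on the connected component $D_0 \subseteq D$ containing $\lambda^{\star}$.

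To finish, I would pass to the boundary by continuity: the component $D_0$ is a bounded open subset of $\mathbb{C}$, so its boundary $\partial D_0$ is non-empty, and $\partial D_0 \subseteq \partial D$ (any point of $\partial D_0$ that lay in $D$ would, together with $D_0$, violate either openness or maximality of the component). For any boundary point $\lambda_b \in \partial D_0$, pick a sequence $\lambda_n \in D_0$ with $\lambda_n \to \lambda_b$; then $f(\lambda_n) = c$ for every $n$, and continuity of $f$ on $\bar D$ gives $f(\lambda_b) = c$, hence $|f(\lambda_b)| = M$. Thus the maximum is attained at a boundary point as claimed.

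The only delicate step is the justification of the local maximum modulus principle and the inclusion $\partial D_0 \subseteq \partial D$ for a connected component of the possibly disconnected open set $D$; both are standard facts of complex analysis and point-set topology, so no genuine obstacle is expected. The rest is straightforward compactness and continuity bookkeeping.
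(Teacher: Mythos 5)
Your proof is correct and is exactly the standard argument that the paper alludes to (it does not prove the result itself but merely cites it, as a corollary of the maximum modulus principle, to Cartan's textbook). The compactness/extreme-value step, the appeal to the local maximum modulus principle to force constancy on the connected component $D_0$, and the topological observation $\partial D_0 \subseteq \partial D$ followed by passage to the boundary via continuity are all valid and constitute the usual proof of this corollary.
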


Since the function $f(\lambda)$ in \eqref{eq:f+deltaf singular function} is entire, in particular it is holomorphic on the set $D:=\left\lbrace \lambda \in \mathbb{C}: \left| \lambda \right| \leq 1 \right\rbrace$. From Theorem \ref{thm:corollary_max_modulus}, we get that the maximum of $f\left( \lambda \right)$ is obtained on the boundary of the unit disk, that is $\left\lbrace \lambda \in \mathbb{C}: \left| \lambda \right| =1 \right\rbrace$. We are interested in finding a perturbation $\Delta\mathcal{F}$ such that
\begin{equation*}
    \max_{\lambda \in \partial D} \left| f(\lambda) \right|=0,
\end{equation*}
from which, consequently, we have that
\begin{equation}
\label{eq:max_unit_disk}
    f(\lambda) \equiv 0 \qquad \lambda \in D.
\end{equation}
Moreover, since $f(\lambda)$ is entire, we have that it can be written using Taylor expansion in $\lambda=0$ as follows:
\begin{equation*}
 f(\lambda)  = \sum_{k=0}^{\infty} \frac{{f}^{\left( k \right)}\left( 0 \right)}{k!} \lambda^k.
\end{equation*}
The property in \eqref{eq:max_unit_disk} yields $f^{\left( k \right)}\left( 0 \right)=0$ for each $k$, which implies $f(\lambda) = 0$ for each $\lambda \in \mathbb{C}$.

These observations lead to a different reformulation of the optimization problem \eqref{eq:opt_prob_contin}, which is the following:
\begin{align}
\label{eq:opt_pbm_unit_disk}
    \left[\Delta A_d^*,\ldots, \Delta A_1^* \right]=&\mbox{arg} \min_{
    \Delta\mathcal{F}} \| \Delta \mathcal{F} \|_F, \\
    &\mbox{subj. to} \; f(\lambda)=0, \;\mbox{for} \; \left| \lambda \right|=1, \notag
\end{align}
where we impose the condition $f(\lambda)=0$ only at $\lambda$ on the boundary of the unit disk.

However, the formulation \eqref{eq:opt_pbm_unit_disk} does not modify the constraint on the determinant in \eqref{eq:f+deltaf singular function} into a discrete one, unlike the analogous for matrix polynomials. Indeed, as explained in \cite{GnazzoGugl}, the fundamental theorem of algebra is able to convert the condition into a discrete one.

The choice of a suitable number of points in the case of general matrix-valued functions represents a delicate feature of our method. Our idea consists in proposing an efficient approximation of the function $f(\lambda)$ through a polynomial interpolant. To this purpose, we use a classical approximation result for analytic functions, which can be found in \cite{AustinTref2014}, and which we report here for completeness.

\begin{theorem}
\label{thm:Thref_approx_pol}
Let $f$ be analytic in $D_{R}=\left\{ z \in \mathbb{C}: \left| z\right| \leq R \right\}$ for some $R>1$. Let $p\left(z \right)$ be the polynomial interpolant of degree $m-1$ at the points $z_k=e^{\frac{2\pi i}{m}k}$, for $k=1,\ldots,m$. Then for any $D_{\rho}$, with $1 < \rho < R$, the polynomial approximation has accuracy
\begin{equation*}
    \left| p(z)-f(z)\right| = \left\lbrace \begin{array}{c}
         O\left( \rho^{-m}\right), \quad \left|z\right| \leq 1,  \\[3mm]
         O\left( \left| z \right|^m \rho^{-m} \right), \quad 1 \leq \left|z\right| < \rho.
    \end{array} \right.
\end{equation*}
\end{theorem}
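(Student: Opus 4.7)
The plan is to prove this via the classical Hermite contour–integral representation of the polynomial interpolation error, exploiting the fact that the nodal polynomial associated with the $m$-th roots of unity has a particularly simple closed form.

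First I would recall the Hermite remainder formula: if $p$ is the degree $m-1$ interpolant of $f$ at distinct nodes $z_1,\dots,z_m$ and $\ell(w)=\prod_{k=1}^m(w-z_k)$, then for every $z$ inside a simple closed contour $\Gamma\subset D_R$ enclosing all the nodes,
\begin{equation*}
f(z)-p(z) \;=\; \frac{1}{2\pi \iu}\oint_{\Gamma}\frac{\ell(z)}{\ell(w)}\,\frac{f(w)}{w-z}\,dw.
\end{equation*}
Since the nodes here are the $m$-th roots of unity, $\ell(w)=w^{m}-1$, which is the key simplification that makes the estimate clean.

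Next I would pick the contour $\Gamma=\{|w|=\rho\}$ with $1<\rho<R$, so that $\Gamma\subset D_R$ (hence $f$ is bounded on $\Gamma$ by some $M(\rho):=\max_{|w|=\rho}|f(w)|$) and $\Gamma$ encloses the unit circle where the nodes lie. On this contour the reverse triangle inequality gives $|\ell(w)|=|w^{m}-1|\ge\rho^{m}-1$, and the length of $\Gamma$ is $2\pi\rho$. It then remains to estimate $|\ell(z)|$ and $|w-z|$ in the two regimes stated in the theorem.

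For $|z|\le 1$ I would bound $|\ell(z)|=|z^{m}-1|\le 2$ and $|w-z|\ge\rho-1>0$, so
\begin{equation*}
|f(z)-p(z)| \;\le\; \frac{1}{2\pi}\cdot 2\pi\rho\cdot \frac{2\,M(\rho)}{(\rho^{m}-1)(\rho-1)} \;=\; O(\rho^{-m}).
\end{equation*}
For $1\le|z|<\rho$ I would instead use $|\ell(z)|\le|z|^{m}+1\le 2|z|^{m}$ together with $|w-z|\ge\rho-|z|>0$, yielding
\begin{equation*}
|f(z)-p(z)| \;\le\; \frac{\rho\,M(\rho)}{(\rho^{m}-1)(\rho-|z|)}\cdot 2|z|^{m} \;=\; O\!\left(|z|^{m}\rho^{-m}\right).
\end{equation*}

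The only mildly delicate point is justifying the Hermite formula itself (equivalently, that the integrand in the contour integral, after subtracting the polynomial part, represents exactly $f(z)-p(z)$), which follows from the residue theorem applied to $f(w)/\big((w-z)\ell(w)/\ell(z)\big)$: the residues at $w=z_k$ reconstruct $p(z)$ by Lagrange interpolation, and the residue at $w=z$ yields $f(z)$. Once this identity is in hand, both bounds reduce to elementary modulus estimates on the chosen circle, and the implicit constants can be taken uniform in $z$ on the respective compact/closed sub-regions of $D_\rho$.
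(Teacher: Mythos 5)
The paper does not prove this theorem; it states it as a quoted result from \cite{AustinTref2014}, so there is no internal proof to compare against. Your argument is correct and is in fact the standard one used in that reference and in Trefethen's approximation-theory treatment: apply the Hermite contour-integral remainder formula, exploit that the nodal polynomial for the $m$-th roots of unity is simply $\ell(w)=w^m-1$, and estimate moduli on the circle $|w|=\rho$. All the individual bounds check out: $|w^m-1|\ge\rho^m-1$ on $|w|=\rho$, $|z^m-1|\le 2$ for $|z|\le 1$, $|z^m-1|\le 2|z|^m$ for $|z|\ge 1$, and $|w-z|\ge\rho-|z|$. The only caveat, which you already flag, is that in the outer regime your explicit constant carries a factor $(\rho-|z|)^{-1}$, so the implicit constant in $O\!\left(|z|^m\rho^{-m}\right)$ is uniform only on compact subsets of $\{|z|<\rho\}$ (or, equivalently, one pushes the contour to a slightly larger radius $\rho'<R$); this is the intended reading of the statement. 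Your sketch of the residue-calculus justification of the Hermite formula is also sound: the pole at $w=z$ gives $f(z)$ and the poles at the nodes reassemble the Lagrange form of $p(z)$.
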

Theorem \ref{thm:Thref_approx_pol} proves that the polynomial $p$ obtained by interpolation at $m$ points on the unit disk is able to provide an accurate approximation for the determinant of $\mathcal{F}+\Delta \mathcal{F}$. In detail, given the Taylor expansion of $f(\lambda)=\sum_{k=0}^{\infty} a_k \lambda^k$,  with the coefficients expressed by the Cauchy formula
\begin{equation}
\label{eq:coeff_an}
    a_k:= \frac{1}{2\pi i}\int_{\left| \zeta \right|=1} \zeta^{- k -1} f(\zeta) d \zeta,
\end{equation}
the use of Cauchy's estimates proves that
\begin{equation*}
    \left| a_k \right| = O\left(\rho^{-k}\right), \quad \mbox{as} \; k \rightarrow \infty.
\end{equation*}

Theorem \ref{thm:Thref_approx_pol} suggests that the polynomial approximation $\left| p(z) - f(z) \right|$ for $\left| z \right| \in D_R$ is accurate in the complex unit disk. As explained in \cite{AustinTref2014}, for values $\left|z\right| =1$, the leading term of the approximation error is $\left| a_m \right|$. 
Due to this, we decide to monitor the leading term of the approximation error as a possible indicator of the accuracy of the polynomial interpolation, neglecting the remaining terms in the approximation error. As observed by an anonymous referee, we cannot prove rigorously that $\left| p(z)- f(z) \right|$
is bounded by a tolerance strictly related to the magnitude of $a_m$. For this reason, in our numerical implementation of the method, we consider it jointly with a more robust criterion,
based on the computation of $\left| p(z)- f(z) \right|$ on  suitable grid. 

If $|a_m|$ is large we cannot expect an accurate approximation; on the other side if $|a_m|$ is small enough we further check $\left| p(z)- f(z) \right|$ on a numerical grid, and if this is also small, we choose the value $m$.
This combination of choices guarantees a sufficiently precise polynomial approximation of the function $f(z)$. 
We describe this computational procedure in the subsequent Section \ref{sec:Computational issues}. Moreover, we numerically illustrate this procedure on a set of numerical tests in Section \ref{sec:Numerical examples matrix valued}.

Since the theoretical bound for \eqref{eq:coeff_an} involves the $m$-th derivative of the function $f(\lambda)$, we proceed by using a numerical approximation of the integral at the right-hand side.

We propose a method based on a proper approximation of the $m$-th coefficient of the Taylor expansion in \eqref{eq:coeff_an} for the function $f(\lambda)$. Our idea consists in finding a value $m$ such that the coefficient
\begin{equation*}
    \left| a_m \right| \leq \mbox{tol},
\end{equation*}
where $\mbox{tol}$ should be taken smallest than the desired accuracy of the method. The integrand function in \eqref{eq:coeff_an} presents a pole at $\lambda =0$, then it is still analytic in a annulus centered in the origin. In order to study the convergence of $a_m$, we exploit the properties of the trapezoidal rule for analytic functions, presented in \cite[Theorem 12.1]{TrefWeid}. In particular, we refer to the following theorem:
\begin{theorem}
\label{thm:Trapezoidal_Trefethen}
Suppose $f$ is analytic and satisfies $\left| f(\lambda) \right| \leq M$ in the annulus $r^{-1} < \left| \lambda \right| < r$, for some $r>1$. Consider the Laurent series representation of $f$ 
\[
f(z)=\sum_{j=-\infty}^{+\infty} a_j z^j, \; \mbox{where} \; a_j=\frac{1}{2\pi i} \int_{\left| \zeta\right| =1} \zeta^{-j-1} f(\zeta) d\zeta,
\]
and, for $N \geq 1$, the approximation $a_j^{[N]}$ of the coefficient $a_j$, given by the trapezoidal rule, that is
\[
a_j^{[N]} =\frac{1}{N} \sum_{\ell=1}^N z_{\ell}^{-j}f(z_{\ell}), \; \mbox{where} \; z_{\ell}=e^{\frac{2\pi i}{N}\ell}, \; \mbox{for} \; \ell=1,\ldots,N.
\]
Then we have:
\begin{equation*}
    \left| a_j^{\left[N\right]} -a_j\right| \leq \frac{M \left( r^j + r^
    {-j}\right)}{r^N-1}.
\end{equation*}
\end{theorem}
Theorem \ref{thm:Trapezoidal_Trefethen} holds for the function $f(\lambda)$, which is entire on the complex plane. Then, we have that the approximation $a_j^{\left[N\right]}$ converges to the respective coefficient $a_j$ of the Taylor series exponentially in $N$. This is useful in the numerical implementation of the method, since it allows us to efficiently approximate the Taylor coefficients $a_j$, by means of the trapezoidal rule.

Then, from the results given by Theorems \ref{thm:Thref_approx_pol} and \ref{thm:Trapezoidal_Trefethen}, we choose as set of evaluations points
\begin{equation}
\label{eq: points_from Thefethen}
    \mu_j= e^{\frac{2 \pi i}{m}j}, \qquad j=1,\ldots,m.
\end{equation}

Using the previous remarks, we construct the discrete version of the optimization problem, in the following way:
\begin{align}
\label{eq:opt_pbm_finite_n}
    \left[\Delta A_d^*,\ldots, \Delta A_1^* \right]=&\mbox{arg} \min_{
    \Delta\mathcal{F}} \| \Delta \mathcal{F} \|_F, \\
    &\mbox{subj. to } f( \mu_j )=0, \quad \mbox{for} \; j=1,\ldots,m, \notag
\end{align}
where we choose the points $\mu_j$ as in \eqref{eq: points_from Thefethen}.

\section{A two-level approach}
\label{sec:Two-level approach}

Starting from this Section, we consider matrix-valued function $\mathcal{F}(\lambda)$ such that
\begin{equation*}
    \max_{p \in \Xi} \left| \det ( \mathcal{F}(p)) \right| =1,
\end{equation*}
where $\Xi$ is a discrete set of points and $\Xi \subseteq \partial D$, with $D$ the unit disk in the complex plane. Ideally, we would like to normalize the starting matrix-valued function such that $\max_{\lambda \in \partial D} \left| \det ( \mathcal{F}(\lambda) )\right| =1$, but, instead, we propose to consider a discrete set of points $\Xi$, which we describe in Section \ref{sec:Computational issues}. It is important to remark that this normalization is not necessary from a theoretical point of view, but it may be useful from the point of view of the numerical implementation, since it would allow us to deal with determinant of very large or small magnitude.

The distance to singularity is determined by the solution of the optimization problem \eqref{eq:opt_pbm_finite_n}. As proposed in \cite{GnazzoGugl}, we rephrase the optimization problem into an equivalent one, and solve it by an iterative method. For convenience, we denote the perturbations as
\begin{equation*}
    \left[ \Delta A_d, \ldots, \Delta A_1 \right]= \varepsilon \left[ \Delta_d, \ldots, \Delta_1 \right],
\end{equation*}
where $\varepsilon >0$ and $\|\left[ \Delta_d, \ldots, \Delta_1 \right]\|_F=1$, from which we have that $\| \Delta \mathcal{F} \|_F=\varepsilon$. We aim to develop a numerical method to approximate the smallest $\varepsilon$ such that $\det ( \mathcal{F}(\mu_j) + \allowbreak \Delta \mathcal{F}(\mu_j) )=0$, for $j=1,\ldots,m$. 

\subsection{Numerically singular matrices}

The next step consists in translating the constraint on the singularity of the $m$ evaluations of the perturbed matrix-valued function, i.e.
\begin{equation}
\label{eq:det_at_points_section_numerical_sing_matrices}
    \det \left( \mathcal{F}(\mu_j) + \Delta \mathcal{F}(\mu_j) \right)=0, 
\end{equation}
for each $\mu_j$, with $j=1,\ldots,m$, into a more practicable problem.

Indeed, due to the possible numerical issues connected with the computation of a determinant, we prefer to employ the singular value decomposition of the matrices $\mathcal{F}(\mu_j)+ \Delta\mathcal{F}(\mu_j)$ in the numerical implementation of the method, in order to study the singularity of each matrix for $j=1,\ldots,m$. Therefore, for each $j=1,\ldots,m$ we will take into account the smallest singular value
\begin{equation*}
     \sigma_{\min}\left(\mu_j\right):= \sigma_{\min} \left( \mathcal{F} \left(\mu_j \right) + \Delta \mathcal{F} \left( \mu_j \right) \right),
 \end{equation*}
and set $\sigma_{\min}\left( \mu_j \right)=0$ instead of the determinant \eqref{eq:det_at_points_section_numerical_sing_matrices}.
\begin{remark}
We emphasize that the numerical implementation of our method provides a perturbation $\Delta \mathcal{F}\left(\lambda \right)$ for which we can only guarantee that the condition \eqref{eq:f+deltaf singular function} is satisfied with a tiny error. Therefore, in this setting it may be more appropriate to use the notion of \emph{numerical rank} of a matrix (see \cite{GoluVanl13}, Section $5.4.1$):

\begin{definition}
Consider a matrix $A \in \mathbb{C}^{n \times n}$ and a certain threshold $\delta >0$, larger or equal than machine precision. Let $\sigma_1 \geq \sigma_2 \geq \ldots \geq \sigma_n$ be the singular values computed in finite arithmetic. We say that $r$ is the \emph{numerical rank} of the matrix $A$ if
\begin{equation*}
    \sigma_1 \geq \sigma_2 \geq \ldots \geq \sigma_{r} > \delta \geq \sigma_{r+1} \geq \ldots \geq \sigma_n.
\end{equation*}
Consequently, we define the matrix $A$ \emph{numerically singular} if the numerical rank $r < n$.
\end{definition}
In the subsequent Section \ref{sec:Computational issues}, we shall make use of a sufficiently small threshold $\delta$, which guarantees that the matrices $\mathcal{F}(\mu_j) + \Delta \mathcal{F}(\mu_j)$ are numerically singular for each $\mu_j$, $j=1,\ldots,m$.
\end{remark}
In order to solve the optimization problem \eqref{eq:opt_pbm_finite_n}, we propose a two-level iterative methodology:
\begin{enumerate}[(i)]
    \item an \emph{inner iteration}, which is a minimization procedure at fixed $\varepsilon$ for the functional
    \begin{equation}
    \label{eq:functional_G}
    G_{\varepsilon}\left( \Delta_d,\ldots, \Delta_1 \right):=\frac{1}{2} \sum_{j=1}^m \sigma^2_{j}(\Delta_d,\ldots, \Delta_1),
    \end{equation}
    where we denote with $\sigma_j=\sigma_{\min}(\mu_j)$, for $j=1,\ldots,m$ (we omit for brevity the dependence of the singular values from $\varepsilon$). From this step, we get the local minimizers $\Delta_d (\varepsilon), \ldots, \Delta_1 (\varepsilon)$;
    \item an \emph{outer iteration}: a strategy to searching for the smallest positive zero $\varepsilon^*$ of the functional
    \begin{equation*}
        G_{\varepsilon} \left( \Delta_d (\varepsilon), \ldots, \Delta_1 (\varepsilon) \right) =0.
    \end{equation*}
\end{enumerate}
Note that in the numerical implementation the value of the number of support points $m$ may change as the value of $\varepsilon$ changes. This requires an additional search of the most suitable choice of the number of needed points $m$. For simplicity, in this Section we provide a description of the method at fixed $m$. The details about the choice of the number $m$ of points are addressed in Section \ref{sec:Computational issues}.

\subsection{Inner iteration}
\label{subsec:Inner for matrix-valued functions}

The inner iteration consists in a minimization procedure that computes the local minimizers $\Delta_d (\varepsilon),\ldots, \Delta_1 (\varepsilon)$ of the functional \eqref{eq:functional_G}.  Then, throughout this subsection, we study the problem and minimize the functional \eqref{eq:functional_G} at fixed $\varepsilon$. We propose to solve this optimization problem by using a constrained steepest descent method for the functional $G_{\varepsilon}$, at fixed $\varepsilon$ and $m$. In order to compute the gradient of $G_{\varepsilon}$, we parametrize with respect to the time the perturbations, that is $\Delta_i(t)$, for $i=1,\ldots,d$ and impose the constraint $\| \boldsymbol{\Delta}(t) \|_F=1$, where we denote $\boldsymbol{\Delta}(t):=\left[ \Delta_d(t),\ldots, \Delta_1(t) \right]$. This allows us to employ a continuous in time minimization method, to solve the inner iteration, following the idea presented in \cite{GuglLubich}. Indeed, we aim to compute a trajectory reaching, at a stationary point, a solution of the optimization problem. To do this, we compute the gradient and study the constrained gradient system. The requirement on the norm $\|\boldsymbol{\Delta}(t) \|_F=1$ can be translated as
\begin{equation*}
0=\frac{d}{dt}\| \boldsymbol{\Delta}(t) \|_F^2 = 2 \left\langle \boldsymbol{\Delta}(t), \dot{\boldsymbol{\Delta}}(t) \right\rangle,
\end{equation*}
and then, the condition becomes
\begin{equation*}
    \left\langle \boldsymbol{\Delta}(t), \dot{\boldsymbol{\Delta}}(t) \right\rangle=0.
\end{equation*}
The steepest descent method requires the derivative of the gradient $G_{\varepsilon}$ and therefore, the derivatives of the singular values $\sigma_{j}$, with respect to $t$. From the standard theory of perturbation (see for example \cite{HornJoh}), we have that each simple and nonzero singular value $\sigma_{j}(t)$ is differentiable and, for $j=1,\ldots,m$, the derivative of $\sigma^2_j(t)$ is
\begin{align*}
   \frac{1}{2} \frac{d}{dt}\sigma^2_j= \dot{\sigma}_j \sigma_j &=
   \varepsilon \sigma_j \mbox{Re} \left(u_j^H \left(\sum_{i=1}^d f_i(\mu_j) \dot{\Delta}_i \right) v_j\right) \\
   &= \varepsilon \sum_{i=1}^d \left\langle \sigma_j \overline{f_i(\mu_j)}u_jv_j^H, \dot{\Delta}_i \right\rangle,
\end{align*}
where we dropped the dependence from $t$ and we denote by $u_j,v_j$ the left and right singular vectors of the matrix $\sum_{i=1}^d f_i(\mu_j) \left(A_i + \varepsilon \Delta_i \right)$, associated with the smallest singular value. From these computations, we write the derivative of \eqref{eq:functional_G} with respect to $t$, making use of the Frobenius inner product:
\begin{align}
\label{eq:derivative_G_eps}
    \frac{d}{dt} G_{\varepsilon}\left( \boldsymbol{\Delta}(t) \right)&=
    \varepsilon  \left( \left\langle M_d, \dot{\Delta}_d \right\rangle + \ldots + \left\langle M_1, \dot{\Delta}_1 \right\rangle \right) \\ 
    &= \varepsilon \left\langle \left[ M_d, \ldots, M_1 \right], \left[ \dot{\Delta}_d, \ldots, \dot{\Delta}_1 \right] \right\rangle, \notag
\end{align}
where we define
\begin{equation*}
    M_i:=\sum_{j=1}^m \sigma_j \overline{f_i(\mu_j)} u_j v_j^H,
\end{equation*}
for $i=1,\ldots,d$. From \eqref{eq:derivative_G_eps}, we denote $\mathbf{M}:=\left[ M_d, \ldots, M_1 \right]$ and we have that
\begin{equation}
\label{eq:derivative_functional_matrix-valued}
    \frac{1}{\varepsilon} \frac{d}{dt}G_{\varepsilon}\left( \boldsymbol{\Delta}(t) \right)= \left\langle \mathbf{M}, \dot{\boldsymbol{\Delta}} \right\rangle,
\end{equation}
which identifies $\mathbf{M}$ as the free gradient of $G_{\varepsilon}$. Note that the direction of steepest descent for the functional $G_{\varepsilon}$ must be computed, taking into account both the relation \eqref{eq:derivative_functional_matrix-valued} and the constraint on the admissible set of perturbations, that is $\boldsymbol{\Delta}(t)$ such that $\| \boldsymbol{\Delta}(t)\|_F=1$. Then, this direction of steepest descent for the constrained functional can be obtained using:
\begin{lemma}[Constrained steepest descent method] Consider $\mathbf{M},\boldsymbol{\Delta} \in \mathbb{C}^{n \times dn}$ not proportional to each other and $\mathbf{Z} \in \mathbb{C}^{n \times dn}$. A solution of the constrained minimization problem
\begin{align*}
    \mathbf{Z}^*=&{\rm{arg}} \min_{\mathbf{Z} \in \mathbb{C}^{n \times dn}} \left\langle \mathbf{M}, \mathbf{Z} \right\rangle \\
    &{\rm{subj.\; to}} \; \left\langle \mathbf{Z}, \boldsymbol{\Delta} \right\rangle=0, \\
    &{\rm{and}} \; \| \mathbf{Z} \|_F=1 \; {\rm{(normalization \; constraint)}}
    \end{align*}
is given by
\begin{equation}
\label{eq:solution_minimiz}
    \kappa \mathbf{Z}^*= - \mathbf{M} + \eta \boldsymbol{\Delta},
\end{equation}
where $\eta= \left\langle \mathbf{M}, \boldsymbol{\Delta}\right\rangle$ and $\kappa$ is the norm of the right-hand side.
\label{lem:minim}
\end{lemma}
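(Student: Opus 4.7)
The plan is to view $\mathbb{C}^{n\times dn}$ as a real inner product space equipped with the bilinear form $\langle X,Y\rangle_{\mathbb{R}} := \mathrm{Re}\langle X,Y\rangle$, and to treat the statement as a classical constrained optimization on that real Hilbert space. The correct reading of the problem is: minimize the linear functional $\mathbf{Z} \mapsto \mathrm{Re}\langle \mathbf{M}, \mathbf{Z}\rangle$ over the intersection of the unit sphere $\|\mathbf{Z}\|_F = 1$ with the hyperplane $\mathrm{Re}\langle \mathbf{\Delta}, \mathbf{Z}\rangle = 0$ (this orthogonality is what is forced by differentiating the constraint $\|\mathbf{\Delta}(t)\|_F = 1$).

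First I would apply the method of Lagrange multipliers in the real setting. Since both $\mathbf{Z}\mapsto\tfrac12\|\mathbf{Z}\|_F^2$ and $\mathbf{Z}\mapsto\mathrm{Re}\langle\mathbf{\Delta},\mathbf{Z}\rangle$ are smooth with linearly independent gradients at any $\mathbf{Z}^*$ on the feasible set (using that $\mathbf{\Delta}\neq 0$), any minimizer must satisfy
\begin{equation*}
\mathbf{M} \;=\; \alpha\,\mathbf{\Delta} \;+\; \beta\,\mathbf{Z}^*
\end{equation*}
for some real scalars $\alpha,\beta$. Taking the real inner product of this identity with $\mathbf{\Delta}$ and using $\|\mathbf{\Delta}\|_F=1$ together with the constraint $\mathrm{Re}\langle \mathbf{\Delta},\mathbf{Z}^*\rangle = 0$ yields $\alpha = \mathrm{Re}\langle\mathbf{M},\mathbf{\Delta}\rangle = \eta$. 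Hence $\beta \mathbf{Z}^* = \mathbf{M} - \eta\,\mathbf{\Delta}$, and taking norms gives $|\beta| = \|\mathbf{M} - \eta\,\mathbf{\Delta}\|_F =: \kappa$.

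Next I would select the correct sign of $\beta$. Evaluating the objective at the critical point yields $\mathrm{Re}\langle \mathbf{M}, \mathbf{Z}^*\rangle = \beta\|\mathbf{Z}^*\|_F^2 = \beta$, so the minimum is achieved when $\beta = -\kappa$. This produces
\begin{equation*}
\kappa\,\mathbf{Z}^* \;=\; -\mathbf{M} + \eta\,\mathbf{\Delta},
\end{equation*}
which is precisely the claimed formula. To conclude I would observe that $\kappa>0$ is guaranteed by the non-proportionality hypothesis on $\mathbf{M}$ and $\mathbf{\Delta}$: if instead $\mathbf{M}-\eta\mathbf{\Delta}=0$, then $\mathbf{M}$ would be a real scalar multiple of $\mathbf{\Delta}$, contradicting the assumption; therefore the normalization step is well defined.

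The calculation itself is routine; the only subtle point — and the one I would state carefully — is the use of the \emph{real} inner product $\mathrm{Re}\langle\cdot,\cdot\rangle$ rather than the complex Frobenius form. This is essential both because the constraint involves the real part and because it makes the Lagrange multipliers $\alpha,\beta$ real-valued, so that the identification $\beta\mathbf{Z}^* = \mathbf{M}-\eta\mathbf{\Delta}$ legitimately determines $\mathbf{Z}^*$ up to the sign choice above. No other obstacle arises.
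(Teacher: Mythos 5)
Your argument is correct, and it correctly repairs the obvious misprints in the lemma statement (the objective and constraint should read $\mathrm{Re}\langle\mathbf{M},\mathbf{Z}\rangle$ and $\mathrm{Re}\langle\mathbf{\Delta},\mathbf{Z}\rangle=0$, with $\|\mathbf{\Delta}\|_F=1$ understood from the surrounding discussion). However, your route is genuinely different from the paper's. The paper does not invoke Lagrange multipliers at all: it observes in one line that $\mathrm{Re}\langle\cdot,\cdot\rangle$ is the Euclidean inner product on the realification $\mathbb{R}^{2n\times 2dn}$, that a real linear functional restricted to the unit sphere of a subspace is extremized by the normalized orthogonal projection of its representer onto that subspace, and that $-\mathbf{M}+\eta\mathbf{\Delta}$ with $\eta=\mathrm{Re}\langle\mathbf{M},\mathbf{\Delta}\rangle$ is exactly that projection (with the sign chosen for a minimum) onto $\{\mathbf{Z}:\mathrm{Re}\langle\mathbf{\Delta},\mathbf{Z}\rangle=0\}$. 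Your Lagrange-multiplier derivation reaches the same stationarity relation $\mathbf{M}=\alpha\mathbf{\Delta}+\beta\mathbf{Z}^*$, pins down $\alpha=\eta$ by pairing with $\mathbf{\Delta}$, and selects $\beta=-\kappa$ by comparing objective values; it is more explicit and makes the sign choice and the role of the non-proportionality hypothesis (namely $\kappa>0$) completely transparent, at the cost of being longer. The paper's projection argument is shorter and more conceptual but leaves those verifications implicit. Both are sound; the key insight you flagged — working over the \emph{real} inner product so that the multipliers are real and $\mathbf{Z}^*$ is determined by a real scalar relation — is exactly what the paper's first sentence is doing implicitly.
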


\begin{proof}
As a first step, we note that the real Frobenius inner product on $\mathbb{C}^{n \times dn}$ is the real Frobenius inner product on $\mathbb{R}^{n \times 2dn}$. Moreover, the real Frobenius inner product on $\mathbb{R}^{n \times 2dn}$ is the standard scalar inner product on $\mathbb{R}^{2dn^2}$, if we consider the vectorizations of the matrices involved. Then, we consider the vectors $z,m,\delta$ that are the vectorizations of the matrices $\mathbf{Z},\mathbf{M},\boldsymbol{\Delta}$, respectively. The constraint minimization problem reduces to:
\begin{align*}
    z^* = \arg \min_{\|z\|_2=1,\; z^T\delta= 0} m^Tz.
\end{align*}
Without the additional constraint that the solution should be in $\left\lbrace z \in \mathbb{R}^{2dn^2}: z^T \delta = 0 \right\rbrace$, we have that 
\begin{align*}
    \arg \min_{\|z\|_2=1} m^Tz = - \frac{m}{\| m \|_2}.
\end{align*}
Incorporating the constraint, we have that the $z^*$ is equal to projecting $-m$ onto the subspace $\left\lbrace z \in \mathbb{R}^{2dn^2}: z^T \delta = 0 \right\rbrace$ and normalizing the result. Indeed, at the end we have that:
\begin{align*}
    z^* = \frac{-m + (\delta^T m) \delta}{\|-m + (\delta^T m)\delta\|_2},
\end{align*}
and we may define $\eta := (\delta^T m)$. Constructing the associated matrices, we have that the statement holds, and the norm conservation follows from $- \langle   \boldsymbol{\Delta},  \mathbf{M}  \rangle + \eta \langle \boldsymbol{\Delta}, \boldsymbol{\Delta} \rangle = - \langle   \boldsymbol{\Delta}, \mathbf{M} \rangle + \eta = 0$. 
\end{proof}

Then, the direction of steepest descent for the functional $G_{\varepsilon}$ is given by the solution of the minimization problem stated in Lemma \ref{lem:minim}. As suggested by Lemma \ref{lem:minim}, we may consider 
the constrained gradient system for the functional $G_{\varepsilon}\left( \boldsymbol{\Delta}\right)$:
\begin{equation}
\label{eq:gradient_system}
    \dot{\boldsymbol{\Delta}}= - \mathbf{M} + \eta \boldsymbol{\Delta},
\end{equation}
with initial datum of unit Frobenius norm. In the following, we prove results that propose a characterization of the stationary points of \eqref{eq:gradient_system} as local extremizers of the functional $G_{\varepsilon}$. Firstly, Theorem \ref{thm:G_decrease} proves that the functional $G_{\varepsilon}$ decreases monotonically along the solution trajectories of \eqref{eq:gradient_system}.   

\begin{theorem}
\label{thm:G_decrease}
Consider $\boldsymbol{\Delta}(t) \in \mathbb{C}^{n \times dn}$, with $\|\boldsymbol{\Delta}(t) \|_F=1$ and solution of the gradient system \eqref{eq:gradient_system}. Consider $\sigma_j(t)$ simple nonzero singular value of the matrix $\sum_{i=1}^d f_i(\mu_j)\left( A_i + \varepsilon \Delta_i(t) \right)$, for $j=1,\ldots,m$. Then:
\begin{equation*}
    \frac{d}{dt}G_{\varepsilon}\left( \boldsymbol{\Delta}(t)\right) \leq 0.
\end{equation*}
\end{theorem}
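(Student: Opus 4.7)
The plan is to plug the gradient flow \eqref{eq:gradient_system} into the expression already derived for $\tfrac{d}{dt}G_\varepsilon$ in \eqref{eq:derivative_G_eps} and exploit the Cauchy--Schwarz inequality together with the unit-norm constraint.

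First I would observe that the hypothesis that each $\sigma_j(t)$ is simple and nonzero is exactly what legitimises the derivative formula \eqref{eq:derivative_G_eps}, so no additional smoothness issue arises. Then, substituting $\dot{\mathbf{\Delta}} = -\mathbf{M} + \eta\mathbf{\Delta}$ into \eqref{eq:derivative_G_eps} and using bilinearity of $\mathrm{Re}\langle\cdot,\cdot\rangle$, I obtain
\begin{equation*}
\frac{1}{\varepsilon}\frac{d}{dt}G_\varepsilon(\mathbf{\Delta}(t))
= \mathrm{Re}\langle \mathbf{M}, -\mathbf{M} + \eta\mathbf{\Delta}\rangle
= -\|\mathbf{M}\|_F^2 + \eta\,\mathrm{Re}\langle \mathbf{M},\mathbf{\Delta}\rangle
= -\|\mathbf{M}\|_F^2 + \eta^2,
\end{equation*}
since by definition $\eta = \mathrm{Re}\langle \mathbf{M},\mathbf{\Delta}\rangle$.

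Second, I would bound $\eta^2$ by Cauchy--Schwarz applied to the real inner product on $\mathbb{C}^{n\times dn}\cong\mathbb{R}^{2n\times 2dn}$: $\eta^2 = (\mathrm{Re}\langle \mathbf{M},\mathbf{\Delta}\rangle)^2 \le \|\mathbf{M}\|_F^2\,\|\mathbf{\Delta}\|_F^2$. To use this I need $\|\mathbf{\Delta}(t)\|_F = 1$ along the flow, which is built into the construction (Lemma~\ref{lem:minim} enforces the orthogonality condition $\mathrm{Re}\langle \mathbf{\Delta},\dot{\mathbf{\Delta}}\rangle=0$); one can verify this directly by checking that $\mathrm{Re}\langle \mathbf{\Delta}, -\mathbf{M}+\eta\mathbf{\Delta}\rangle = -\eta + \eta\|\mathbf{\Delta}\|_F^2 = 0$ whenever $\|\mathbf{\Delta}\|_F = 1$, so the unit-norm sphere is invariant under \eqref{eq:gradient_system}. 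Combining,
\begin{equation*}
\frac{1}{\varepsilon}\frac{d}{dt}G_\varepsilon(\mathbf{\Delta}(t)) \;\le\; -\|\mathbf{M}\|_F^2 + \|\mathbf{M}\|_F^2 \;=\; 0,
\end{equation*}
and multiplying by $\varepsilon>0$ gives the claim.

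The only genuinely delicate point, rather than the inequality itself, is ensuring that the computations above are well defined: the differentiability of $\sigma_j$ requires simplicity, and the invariance of the unit sphere under the flow must be verified before Cauchy--Schwarz becomes sharp enough. Both are direct and I expect no real obstacle; equality in Cauchy--Schwarz characterises stationary points, namely when $\mathbf{M}$ is a real multiple of $\mathbf{\Delta}$, which is the natural first-order optimality condition linking to the outer iteration.
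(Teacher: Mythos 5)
Your proof is correct and follows essentially the same route as the paper's (which simply says ``substitute \eqref{eq:gradient_system} into \eqref{eq:derivative_G_eps}''): substitution gives $-\|\mathbf{M}\|_F^2 + \eta^2$, and Cauchy--Schwarz together with $\|\mathbf{\Delta}\|_F=1$ closes the argument. Your explicit verification that the unit sphere is invariant under the flow is a useful detail the paper leaves implicit, but there is no genuine difference in approach.
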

\begin{proof}
Since $\| \boldsymbol{\Delta}(t) \|_F =1$ , we have that $\left\langle \boldsymbol{\Delta}, \dot{\boldsymbol{\Delta}}\right\rangle = 0$. Then, using the expression of the solution $\boldsymbol{\Delta}$ in \eqref{eq:gradient_system}, we obtain:
\begin{align*}
    \frac{d}{dt} G_{\varepsilon}(\boldsymbol{\Delta}) = \varepsilon \left( \left\langle \mathbf{M}, \dot{\boldsymbol{\Delta}} \right\rangle - \eta \underbrace{\left\langle \boldsymbol{\Delta}, \dot{\boldsymbol{\Delta}} \right\rangle}_{=0} \right) = \varepsilon \left\langle \mathbf{M} - \eta \boldsymbol{\Delta}, \dot{\boldsymbol{\Delta}} \right\rangle = - \varepsilon \| \dot{\boldsymbol{\Delta}} \|_F^2.
\end{align*}
\end{proof}
Moreover, we provide a characterization of the stationary points of the constrained gradient system \eqref{eq:gradient_system} in the following Theorem.
\begin{theorem}
\label{thm:charact_minim}
Consider a solution $\boldsymbol{\Delta}(t)$ with unit Frobenius norm of the gradient system \eqref{eq:gradient_system}. Assume that, for each $t$, we have that $G_{\varepsilon}\left( \boldsymbol{\Delta} \right)>0$ and that the smallest singular values $\sigma_j(t)$ of $\sum_{i=1}^d f_i(\mu_j)\left( A_i + \varepsilon \Delta_i(t) \right)$, for each $j=1,\ldots,m$, are simple and different from $0$. Consider, for $j=1,\ldots,m$, $u_j,v_j$ the left and right singular vectors associated with $\sigma_j$, respectively. Then the following statements are equivalent:
\begin{enumerate}
    \item[(i)] $\frac{d}{dt} G_{\varepsilon}\left( \boldsymbol{\Delta} \right)=0$;
    \item[(ii)] $\dot{\boldsymbol{\Delta}}=0$;
    \item[(iii)] $\boldsymbol{\Delta}$ is a real multiple of the matrix $\mathbf{M}$.
\end{enumerate}
\end{theorem}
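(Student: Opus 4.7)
My plan is to establish the three equivalences by deriving a single master identity for the rate of change of $G_\varepsilon$ along trajectories of the constrained gradient system \eqref{eq:gradient_system}, and then reading off each implication from it. Substituting $\dot{\mathbf{\Delta}}=-\mathbf{M}+\eta\mathbf{\Delta}$ into the expression \eqref{eq:derivative_G_eps} and using $\eta=\mbox{Re}\langle\mathbf{M},\mathbf{\Delta}\rangle$ together with the normalization $\|\mathbf{\Delta}\|_F=1$ yields
\begin{equation*}
\frac{1}{\varepsilon}\frac{d}{dt}G_\varepsilon\bigl(\mathbf{\Delta}(t)\bigr)=-\|\mathbf{M}\|_F^2+\eta^2.
\end{equation*}
By the Cauchy--Schwarz inequality applied to the real part of the Frobenius inner product, $\eta^2\le\|\mathbf{M}\|_F^2\,\|\mathbf{\Delta}\|_F^2=\|\mathbf{M}\|_F^2$, with equality if and only if $\mathbf{\Delta}$ and $\mathbf{M}$ are real multiples of each other. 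This simultaneously reproves the monotonicity statement of Theorem \ref{thm:G_decrease} and isolates the equality case, which is the engine of the characterization.

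With this identity in hand, the three implications follow in short order. For (i) $\Rightarrow$ (iii): vanishing of $\frac{d}{dt}G_\varepsilon$ forces equality in Cauchy--Schwarz, whence $\mathbf{\Delta}$ is a real scalar multiple of $\mathbf{M}$. For (iii) $\Rightarrow$ (ii): writing $\mathbf{\Delta}=c\,\mathbf{M}$ with $c\in\mathbb{R}$, the normalization gives $c^{2}\|\mathbf{M}\|_F^2=1$ and hence $\eta=c\,\|\mathbf{M}\|_F^2=1/c$, so substituting into the flow yields $-\mathbf{M}+\eta\mathbf{\Delta}=-\mathbf{M}+(1/c)(c\mathbf{M})=0$, i.e. $\dot{\mathbf{\Delta}}=0$. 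Finally (ii) $\Rightarrow$ (i) is immediate from $\frac{d}{dt}G_\varepsilon=\varepsilon\,\mbox{Re}\langle\mathbf{M},\dot{\mathbf{\Delta}}\rangle$.

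The only delicate point I anticipate is ensuring $\mathbf{M}\ne 0$ under the standing hypothesis $G_\varepsilon(\mathbf{\Delta})>0$, so that the proportionality in (iii) is meaningful and the step (iii) $\Rightarrow$ (ii) is non-vacuous. A priori the complex weights $\overline{f_i(\mu_j)}$ might conspire to make $\mathbf{M}$ vanish even though several $\sigma_j$ are positive; under the assumption that each $\sigma_j$ is simple and nonzero the rank-one matrices $u_jv_j^H$ are well defined, and I would argue nonvanishing of $\mathbf{M}$ from the linear independence of the vectors $\bigl(f_1(\mu_j),\dots,f_d(\mu_j)\bigr)$ across the interpolation nodes, which is guaranteed by the specific choice \eqref{eq: points_from Thefethen}. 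In the degenerate case $\mathbf{M}=0$ all three statements trivially hold, so the equivalence is preserved in any event.
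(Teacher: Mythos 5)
Your Cauchy--Schwarz argument is correct and is essentially the argument the paper intends, only spelled out: the paper's proof is the one-liner ``equate the right-hand side of \eqref{eq:gradient_system} to zero,'' which yields exactly $\mathbf{M}=\eta\mathbf{\Delta}$ for (ii) $\Leftrightarrow$ (iii), and the identity $\frac{1}{\varepsilon}\frac{d}{dt}G_\varepsilon=-\|\mathbf{M}\|_F^2+\eta^2$ for the link to (i).

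Your final paragraph, however, contains a genuine error. If $\mathbf{M}=0$, then (i) and (ii) hold, but (iii) \emph{fails}: since $\|\mathbf{\Delta}\|_F=1$, the matrix $\mathbf{\Delta}$ cannot be a real multiple of the zero matrix. So the statement that ``all three statements trivially hold'' is false, and without further argument the equivalence would indeed break down in that case. Your heuristic appeal to linear independence of the rows $(f_1(\mu_j),\dots,f_d(\mu_j))$ is also not a proof, since the rank-one matrices $u_jv_j^H$ appear with complex weights and could, a priori, cancel.

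The good news is that $\mathbf{M}\ne 0$ in fact follows from the hypothesis $G_\varepsilon(\mathbf{\Delta})>0$, by the following trace identity. Writing $\mathcal{M}_j:=\sum_{i=1}^d f_i(\mu_j)(A_i+\varepsilon\Delta_i)$ and using $\langle M_i,B\rangle=\sum_{j=1}^m\sigma_j f_i(\mu_j)\,u_j^H B\,v_j$, one has
\begin{equation*}
\sum_{i=1}^d \left\langle M_i,\,A_i+\varepsilon\Delta_i\right\rangle
=\sum_{j=1}^m \sigma_j\,u_j^H\Bigl(\sum_{i=1}^d f_i(\mu_j)(A_i+\varepsilon\Delta_i)\Bigr)v_j
=\sum_{j=1}^m \sigma_j\,u_j^H\mathcal{M}_j v_j
=\sum_{j=1}^m \sigma_j^2=2\,G_\varepsilon(\mathbf{\Delta}),
\end{equation*}
since $\mathcal{M}_j v_j=\sigma_j u_j$ and $\|u_j\|=1$. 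Hence $\mathbf{M}=0$ would force $G_\varepsilon=0$, contradicting the assumption. With this in hand your closing worry is resolved rigorously, and the rest of your proof stands.
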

\begin{proof} 
It is clear that \emph{(iii)}$\implies$\emph{(ii)}$\implies$\emph{(i)} holds. Then, we prove \emph{(i)}$\implies$\emph{(iii)}. From the formula \eqref{eq:derivative_functional_matrix-valued}, we get that:
\[
\frac{1}{\varepsilon}  \frac{d}{d t} G_{\varepsilon}(\boldsymbol{\Delta}) = - \| \mathbf{M} \|_F^2 + \left\langle \mathbf{M}, \boldsymbol{\Delta} \right\rangle^2 \leq 0,
\]
where we employed the Cauchy-Schwarz inequality and the property $\| \boldsymbol{\Delta}\|_F=1$ in the last step. Moreover, the inequality is strict, unless we have $\boldsymbol{\Delta} =\pm \frac{\mathbf{M}}{\| \mathbf{M}\|_F}$.
\end{proof}

The results described in Subsection \ref{subsec:Inner for matrix-valued functions} suggest to find the local extremizers of the functional $G_{\varepsilon}$ by computing the stationary points of the gradient system \eqref{eq:gradient_system}.

\subsection{Outer iteration}
\label{subsec:Outer for matrix-valued}

The distance to singularity is given by the smallest value $\varepsilon^*$ for which the perturbed matrix-valued function $\mathcal{F} + \Delta \mathcal{F}$ is singular. Therefore, our goal is to solve the minimization problem
\begin{equation*}
    \varepsilon^*= \min \left\lbrace \varepsilon > 0: G_{\varepsilon}\left( \boldsymbol{\Delta}( \varepsilon) \right) =0 \right\rbrace,
\end{equation*}
where $\boldsymbol{\Delta}( \varepsilon)$ is a path of stationary points of \eqref{eq:gradient_system}, which means that for each $\varepsilon$ the matrix $\boldsymbol{\Delta}(\varepsilon)=[\Delta_d(\varepsilon),\ldots, \Delta_1(\varepsilon)]$ is the (local) minimizer obtained in the inner iteration, for the value $\varepsilon$. For simplicity, we denote by $g\left( \varepsilon \right):= G_{\varepsilon}\left( \boldsymbol{\Delta}( \varepsilon) \right)$.

\begin{remark}
The functional $G_{\varepsilon}\left( \boldsymbol{\Delta}( \varepsilon) \right)$ also depends on the number $m=m(\varepsilon)$ of points, needed to reach the chosen accuracy. More details about the choice of $m$ are provided in Section \ref{sec:Computational issues}. Here, it is very reasonable to assume that as the value of $\varepsilon$ approaches the target value $\varepsilon^*$, the number of considered points $m$ remains constant. Therefore, we introduce the following assumption:
\end{remark} 

\begin{assum}
    \label{assumption_points}
For values of $\varepsilon$ sufficiently close to $\varepsilon^*$, the number of points $m(\varepsilon)$ needed in the inner iteration is constant.
\end{assum}

In general, in our numerical experiments the number of points does not present drastic variations and appears to be piecewise constant. In Figure \ref{fig:Plot_points_m(eps)}, we provide a plot of the function $\varepsilon \mapsto m \left( \varepsilon \right)$, for the matrix-valued function in the subsequent Example \ref{ex:different_tau} (choosing $\tau=1$). The function is piecewise constant and assumes values in the interval $\left[ 16,19 \right]$.

\begin{figure}
    \centering
\begin{tikzpicture}
    \begin{axis}[xmin=0, xmax=0.7500,
    xtick={0,0.2, 0.4, 0.7034},
    xticklabels={0,0.2, 0.4, 0.7034},
    xlabel=$\varepsilon$, ylabel=$m(\varepsilon)$,
    ]
        \addplot[thick, color=blue, domain=0:0.1450, samples=400]{19};
        \addplot[thick, color=blue, domain=0.1450:0.2550, samples=400]{18};
        \addplot[thick, samples=50, dashed, blue] coordinates {(0.1450,18)(0.1450,19)};
        \addplot[thick, color=blue, domain=0.2550:0.2700, samples=400]{17};
        \addplot[thick, samples=50, dashed, blue] coordinates {(0.2550,17)(0.2550,18)};
        \addplot[thick, color=blue, domain=0.2700:0.2800, samples=400]{18};
        \addplot[thick, samples=50, dashed, blue] coordinates {(0.2700,17)(0.2700,18)};
        \addplot[thick, samples=50, dashed, blue] coordinates {(0.2800,17)(0.2800,18)};
        \addplot[thick, color=blue, domain=0.2800:0.3000, samples=400]{17};
        \addplot[thick, samples=50, dashed, blue] coordinates {(0.3000,17)(0.3000,18)};
        \addplot[thick, color=blue, domain=0.3000:0.3150, samples=400]{18};
         \addplot[thick, samples=50, dashed, blue] coordinates {(0.3150,17)(0.3150,18)};
        \addplot[thick, color=blue, domain=0.3150:0.3950, samples=400]{17};
        \addplot[thick, samples=50, dashed, blue] coordinates {(0.3950,16)(0.3950,17)};
        \addplot[thick, color=blue, domain=0.3950:0.4150, samples=400]{16};
        \addplot[thick, samples=50, dashed, blue] coordinates {(0.4150,16)(0.4150,17)};
        \addplot[thick, color=blue, domain=0.4150:0.5450, samples=400]{17};
        \addplot[thick, samples=50, dashed, blue] coordinates {(0.5450,17)(0.5450,18)};
        \addplot[thick, color=blue, domain=0.5450:0.7500, samples=400]{18};
        \addplot[only marks, mark=*, color=blue]
        table{
            x  y
            0.7034 18
            };
    \end{axis}
\end{tikzpicture}
\caption{Function $\varepsilon \mapsto m(\varepsilon)$ for Example \ref{ex:different_tau}. We indicate as a blue dot the value of the function $m(\varepsilon^{\star})$, with $\varepsilon^\star = 0.7034$. We observe that Assumption \ref{assumption_points} holds, when $\varepsilon$ is sufficiently close to $\varepsilon^{\star}$.}
    \label{fig:Plot_points_m(eps)}
\end{figure}

The purpose of the outer iteration consists in solving the one dimensional root-finding problem $g(\varepsilon^{\star}) = 0$. In order to solve this task, we propose to tune the value $\varepsilon$ by using a combination of the Newton method and the bisection approach. A further generic assumption that we need in order to present a Newton-like iteration is the following:
\begin{assum}
\label{assumption}
The smallest singular value $\sigma_j(\varepsilon)$ of the matrix
\begin{equation*}
\sum_{i=1}^d f_i(\mu_j)\left( A_i + \varepsilon  \Delta_i (\varepsilon) \right)
\end{equation*}
is simple and different from $0$ and, moreover, $\sigma_j(\varepsilon), \boldsymbol{\Delta}(\varepsilon)$ are smooth with respect to $\varepsilon$, for each $j=1,\ldots,m$. 
\end{assum}
Note that if the Assumption was not fulfilled, the bisection method would still be applicable and allows us to compute $\varepsilon^*$. We propose here a method that combines the Newton's iteration and the bisection approach. Thanks to Assumption \ref{assumption}, we are able to compute the derivative of $g(\varepsilon)$ with respect to $\varepsilon$.

\begin{theorem}
\label{thm:derivativ_wrt_eps}
If the following properties hold:
\begin{itemize}
    \item[(i)] $\varepsilon \in \left( 0, \varepsilon^* \right)$, with $g(\varepsilon) >0$;
    \item[(ii)] Assumption \ref{assumption} holds true;
    \item[(iii)] $\mathbf{M}(\varepsilon) \neq 0$.
\end{itemize}
Then:
\begin{equation*}
    \frac{d}{d \varepsilon} g(\varepsilon)= - \| \mathbf{M}(\varepsilon) \|_F.
\end{equation*}
\end{theorem}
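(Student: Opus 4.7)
The plan is to differentiate $g(\varepsilon)=G_\varepsilon(\mathbf{\Delta}(\varepsilon))$ with respect to $\varepsilon$, treating both the explicit dependence through the scaling factor $\varepsilon$ in $A_i+\varepsilon\Delta_i$ and the implicit dependence through the stationary path $\mathbf{\Delta}(\varepsilon)$, and then exploit the first order stationarity conditions identified in Theorem \ref{thm:charact_minim} together with the norm constraint $\|\mathbf{\Delta}(\varepsilon)\|_F=1$ to simplify.

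First, I would set $E_j(\varepsilon)=\sum_{i=1}^d f_i(\mu_j)\bigl(A_i+\varepsilon\Delta_i(\varepsilon)\bigr)$, so that $\sigma_j(\varepsilon)=\sigma_{\min}(E_j(\varepsilon))$. By Assumption \ref{assumption}, each $\sigma_j$ is a simple nonzero singular value and is differentiable, and standard first order singular value perturbation (as in Section \ref{subsec:Inner for matrix-valued functions}) gives
\begin{equation*}
\frac{1}{2}\frac{d}{d\varepsilon}\sigma_j^2(\varepsilon)=\sigma_j(\varepsilon)\,\mathrm{Re}\Bigl(u_j^H \dot E_j(\varepsilon) v_j\Bigr),
\end{equation*}
where $u_j,v_j$ are the associated left and right singular vectors. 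Expanding $\dot E_j(\varepsilon)=\sum_i f_i(\mu_j)\bigl(\Delta_i(\varepsilon)+\varepsilon\dot\Delta_i(\varepsilon)\bigr)$ and summing over $j$, and recognizing the matrices $M_i(\varepsilon)=\sum_j \sigma_j\overline{f_i(\mu_j)}\,u_jv_j^H$ defined in Section \ref{subsec:Inner for matrix-valued functions}, I obtain
\begin{equation*}
\frac{d}{d\varepsilon}g(\varepsilon)=\mathrm{Re}\langle \mathbf{M}(\varepsilon),\mathbf{\Delta}(\varepsilon)\rangle+\varepsilon\,\mathrm{Re}\langle \mathbf{M}(\varepsilon),\dot{\mathbf{\Delta}}(\varepsilon)\rangle.
\end{equation*}

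Next, I would invoke stationarity. By Theorem \ref{thm:charact_minim} (and the characterization of minimizers following it), on the stationary path $\mathbf{\Delta}(\varepsilon)$ the free gradient $\mathbf{M}(\varepsilon)$ is a negative real multiple of $\mathbf{\Delta}(\varepsilon)$; since $\|\mathbf{\Delta}(\varepsilon)\|_F=1$ this gives $\mathbf{M}(\varepsilon)=-\|\mathbf{M}(\varepsilon)\|_F\,\mathbf{\Delta}(\varepsilon)$, whence $\mathrm{Re}\langle \mathbf{M}(\varepsilon),\mathbf{\Delta}(\varepsilon)\rangle=-\|\mathbf{M}(\varepsilon)\|_F$. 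For the second term, differentiating $\|\mathbf{\Delta}(\varepsilon)\|_F^2=1$ yields $\mathrm{Re}\langle \mathbf{\Delta}(\varepsilon),\dot{\mathbf{\Delta}}(\varepsilon)\rangle=0$, and using again $\mathbf{M}(\varepsilon)\parallel\mathbf{\Delta}(\varepsilon)$ we conclude $\mathrm{Re}\langle \mathbf{M}(\varepsilon),\dot{\mathbf{\Delta}}(\varepsilon)\rangle=0$. Substituting gives $\frac{d}{d\varepsilon}g(\varepsilon)=-\|\mathbf{M}(\varepsilon)\|_F$.

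The main subtlety is the implicit dependence: \textit{a priori} the term $\varepsilon\,\mathrm{Re}\langle \mathbf{M},\dot{\mathbf{\Delta}}\rangle$ requires the existence and smoothness of the map $\varepsilon\mapsto\mathbf{\Delta}(\varepsilon)$, which is precisely what Assumption \ref{assumption} secures together with the smoothness of the singular vectors in a neighborhood of a simple singular value. Once that is granted, the cancellation is automatic: it is the usual envelope/Hellmann--Feynman phenomenon, expressing that at a constrained stationary point the free gradient is orthogonal to any admissible variation in $\mathbf{\Delta}$, so only the explicit $\varepsilon$-dependence survives. The hypothesis $\mathbf{M}(\varepsilon)\neq 0$ is needed so that the normalized direction $\mathbf{\Delta}(\varepsilon)=-\mathbf{M}(\varepsilon)/\|\mathbf{M}(\varepsilon)\|_F$ is well defined and the sign in the characterization of minimizers is unambiguous.
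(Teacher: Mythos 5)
Your proof is correct and develops exactly the chain-rule-plus-stationarity argument that the paper defers to the matrix polynomial case in \cite{GnazzoGugl}: differentiate $g(\varepsilon)=G_\varepsilon(\mathbf{\Delta}(\varepsilon))$ to get the two terms $\mathrm{Re}\langle \mathbf{M},\mathbf{\Delta}\rangle+\varepsilon\,\mathrm{Re}\langle \mathbf{M},\dot{\mathbf{\Delta}}\rangle$, then use the stationarity characterization $\mathbf{M}(\varepsilon)=-\|\mathbf{M}(\varepsilon)\|_F\,\mathbf{\Delta}(\varepsilon)$ together with $\|\mathbf{\Delta}(\varepsilon)\|_F\equiv 1$ to see that the first term equals $-\|\mathbf{M}\|_F$ and the second vanishes. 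You also correctly identify the roles of Assumption \ref{assumption} (smoothness of $\sigma_j$, $\mathbf{\Delta}$ with respect to $\varepsilon$) and of the hypothesis $\mathbf{M}(\varepsilon)\neq 0$ (well-definedness and sign of the stationary direction).
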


\begin{proof}
Thanks to Assumption \ref{assumption}, we are allowed to derive the functional with respect to $\varepsilon$:
\begin{equation}
\label{eq:derivate_G_epsilon}
    \frac{d}{d \varepsilon} g(\varepsilon) = \Big\langle \mathbf{M}(\varepsilon), \boldsymbol{\Delta}(\varepsilon) + \varepsilon \frac{d}{d\varepsilon}\boldsymbol{\Delta}(\varepsilon) \Big\rangle.
\end{equation}
Since, by definition, $\boldsymbol{\Delta}(\varepsilon)$ is a path of stationary points of \eqref{eq:gradient_system}, we may employ Theorem \ref{thm:charact_minim} and obtain $\boldsymbol{\Delta}(\varepsilon)= \pm \frac{\mathbf{M}(\varepsilon)}{\| \mathbf{M}(\varepsilon)\|_F}$. The constraint $\| \boldsymbol{\Delta}(\varepsilon) \|_F=1$ for all $\varepsilon$ leads to the relation:
\[
0 = \Big\langle  \boldsymbol{\Delta}(\varepsilon), \frac{d}{d\varepsilon}  \boldsymbol{\Delta}(\varepsilon) \Big\rangle = \pm \frac{1}{\| \mathbf{M}(\varepsilon) \|_F} \Big\langle \mathbf{M}(\varepsilon), \frac{d}{d \varepsilon} \boldsymbol{\Delta}(\varepsilon) \Big\rangle.
\]
Substituting this relation into \eqref{eq:derivate_G_epsilon}, we get $\frac{d}{d \varepsilon} g(\varepsilon) = \pm \| \mathbf{M}(\varepsilon) \|_F.$ Then, since using that, by assumption, $g(\varepsilon) >0$ for $\varepsilon \in ( 0 , \varepsilon^{\star})$ and that, by definition, $g(\varepsilon^{\star})=0$, we get that
\[
\frac{d}{d \varepsilon} g(\varepsilon) = - \| \mathbf{M}(\varepsilon) \|_F.
\]
\end{proof}
The assumption on the smoothness of the function $g(\varepsilon)$ with respect to $\varepsilon$ and the expression for the derivative of $g(\varepsilon)$ given by Theorem \ref{thm:derivativ_wrt_eps} allow us to apply Newton's method for $\varepsilon < \varepsilon^*$:
\begin{equation*}
    \varepsilon_{k+1}= \varepsilon_k - \left( \| \mathbf{M}(\varepsilon_k) \|_F\right)^{-1}  g(\varepsilon_k), \quad k=1,2,\ldots 
\end{equation*}
The quadratic convergence to the distance to singularity $\varepsilon^*$ is guaranteed only by the left and we need to include a bisection step to update the approximation by the right. If $g(\varepsilon_k) < \mbox{tol}$, for a chosen accuracy $\mbox{tol}$ , we interpret that $\varepsilon_k > \varepsilon^*$ and define
\begin{equation*}
    \varepsilon_{k+1}= \frac{ \varepsilon_k + \varepsilon_{k-1}}{2}.
\end{equation*}

\section{Extension to structured matrix-valued functions}
\label{sec:Extension to structured}

    The approach described in Subsections \ref{subsec:Inner for matrix-valued functions} and \ref{subsec:Outer for matrix-valued} can be extended to the case of structured matrix-valued functions, employing the definition of structured distance to singularity:
    \begin{definition}
    \label{def:structured_distance_matrix_functions}
        Consider a regular matrix-valued function $\mathcal{F}(\lambda)$ as in \eqref{eq:matrix-valued function} and a linear subspace $\mathcal{S} \subseteq \mathbb{C}^{n \times dn}$. Assume that $\left[A_d,\ldots,A_1 \right] \in \mathcal{S}$. Then, we define the \emph{structured distance to singularity} as
        \begin{equation*}
            d_{\rm sing}^{\mathcal{S}} \left( \mathcal{F}\right):= \min \left\lbrace \| \Delta \mathcal{F} \|:\det \left( \mathcal{F}(\lambda) + \Delta \mathcal{F}(\lambda)  \right)=0, \left[ \Delta A_d, \ldots, \Delta A_1 \right] \in \mathcal{S} \right\rbrace.
        \end{equation*}
    \end{definition}
A structured version of the optimization problem in \eqref{eq:opt_pbm_finite_n} can be obtained with the introduction of the additional constraint that is $\left[\Delta A_d,\ldots, \Delta A_1 \right] \in \mathcal{S}$. The extension to structured perturbations can be done introducing the notion of orthogonal projection onto the manifold of structured matrix-valued functions. In this setting, the orthogonal projection $\Pi_{\mathcal{S}}$ with respect to the real Frobenius inner product is a map from $\mathbb{C}^{n \times nd}$ to $\mathcal{S}$ such that for every $Z\in \mathbb{C}^{n \times nd}$
\begin{equation*}
    \Pi_{\mathcal{S}}\left( Z \right) \in \mathcal{S}, \quad \left\langle \Pi_{\mathcal{S}}\left( Z \right), W \right\rangle = \left\langle Z, W \right\rangle, \; \forall W \in \mathcal{S}.
\end{equation*}
Lemma \ref{lem:minim} can be flexibly specialized to structured perturbations in $\mathcal{S}$ and the resulting ODE system has the form
\begin{equation*}
    \dot{\boldsymbol{\Delta}}(t)= -\Pi_{\mathcal{S}}\left( \mathbf{M} \right) + \boldsymbol{\Delta} (t),
\end{equation*}
where $\mathbf{M}$ is defined as in Subsection \ref{subsec:Inner for matrix-valued functions}. Consequently, in this case, the stationary points of the functional \eqref{eq:functional_G} are negative multiples of the matrix $\Pi_{\mathcal{S}}\left( \mathbf{M} \right)$. In the outer iteration, the Newton step relies on the modified formula for the derivative:
\begin{equation*}
    \frac{d}{d \varepsilon} g(\varepsilon)=-\| \Pi_{\mathcal{S}} \left( \mathbf{M}(\varepsilon) \right)\|_F.
\end{equation*}
The generalizations of the Lemmas for the structured case and their proofs are analogous to the ones provided in Subsections \ref{subsec:Inner for matrix-valued functions} and \ref{subsec:Outer for matrix-valued}. An extensive description of the method applied to structured perturbations can be found in \cite{GnazzoGugl} for the case of matrix polynomials. The specialization of the method can be done for several structures. Specifically, it may include properties on individual coefficients, such as the presence of constraints for the matrix-valued function to be real. Another example is the one of considering a set of indices $I \subseteq \left\lbrace 1,\ldots, d \right\rbrace$, with cardinality $\left| I \right| < d$, then the distance to singularity with fixed coefficients $A_i$, if $i \in I$ can be computed minimizing the functional $G_{\varepsilon}\left( \boldsymbol{\Delta} \right)$ with the additional constraints ${\Delta}_i(t) \equiv 0$ for $i \in I$, which implies that in the ODE system in \eqref{eq:gradient_system} the indices $i \in I$  are excluded from the gradient system. We follow a similar approach when the matrices have individual properties, such as $\Delta_i \in \mathcal{S}_i$, with $i=1,\ldots,d$ and $\mathcal{S}_i \subseteq \mathbb{C}^{n \times n}$. This is, for example, the case of coefficients $A_i$ with the additional constraint of sparsity patterns. In this setting, the projection $\Pi_{\mathcal{S}}$ can be split using the orthogonal projections $\Pi_{\mathcal{S}_i}$ on each coefficient $A_i$ and the map $\Pi_{\mathcal{S}_i}$ coincides with the identity for the entries in the sparsity pattern and with the null function on the remaining ones:
\begin{align*}
   \left\lbrace \Pi_{\mathcal{S}_i} \left( B \right) \right\rbrace_{j,l}:= \left\lbrace \begin{array}{c}
        B_{j,l}, \quad \mbox{if} \; (j,l) \not\in \mathcal{T}\\[2mm]
         0, \quad \mbox{if} \; (j,l) \in \mathcal{T},
   \end{array} \right. ,
\end{align*}
where $\mathcal{S}_i:= \left\lbrace B \in \mathbb{C}^{n \times n} : B_{j,l}=0 \; \mbox{if} \; (j,l) \in \mathcal{T} \right\rbrace$, for a certain subset of ordered pairs $\mathcal{T}$ of $\left\lbrace 1,\ldots,n \right\rbrace \times \left\lbrace 1,\ldots,n \right\rbrace$. Moreover, we can also adapt the method to matrix-valued functions with collective-like properties, for instance palindromic functions. Note that the computation of the orthogonal projection can be more difficult in these situations, due to the relations among the coefficients of the matrix-valued function.

\section{A novel perspective for the case of matrix polynomials}
\label{sec:comparison_matrix_pol}

The class of matrix-valued functions $\mathcal{F}(\lambda)$ in \eqref{eq:matrix-valued function} contains, of course, the set of matrix polynomials: given $d \in \mathbb{N}$, we may consider the set of matrix polynomials of degree lower or equal to $d-1$,
\begin{equation*}
    \mathcal{P}(\lambda)=\lambda^{d-1} A_d + \ldots + \lambda A_2 + A_1,
\end{equation*}
where $A_i \in \mathbb{C}^{n \times n}$. In \cite{GnazzoGugl} we proposed a suitable method for the approximation of the distance to singularity for matrix polynomials, which relies on the fundamental theorem of algebra. There we select a number $\widetilde{m}:=(d-1)n +1$ of distinct complex points $\left\lbrace \mu_j \right\rbrace$, which is the largest possible degree of the determinant of $\mathcal{P}(\lambda)$ plus $1$. The number of chosen points may be modified in this new perspective, using the criterion for the selection of the number $m$ provided in Section \ref{sec:problem setting}. 

The selection of the number of points relying on the proposed criterion may significantly reduce $m$ with respect to $\widetilde{m}$. This may be of particular importance in situations where the matrix coefficients present a certain sparsity pattern and we are interested in computing the structured distance to singularity with respect to the same sparsity pattern. Indeed, in this context a sparse leading coefficient matrix may produce a determinant of degree lower that $\widetilde{m}$. However, our novel approach may provide a number $m$  even smaller than $\widetilde{m}$. We provide a few numerical tests of this advantage in Subsection \ref{subsec:special case_matrixpol}, comparing the results obtained with this novel approach and the ones obtained with the fundamental theorem of algebra. 

In order to provide a fair comparison between the two methods, we employ a slightly different functional $G_{\varepsilon}$, with respect to the one provided in \eqref{eq:functional_G}. Indeed, we consider the scaled functional
\begin{equation}
\label{eq:scaled_functional_widetildeG}
    \widetilde{G}_{\varepsilon}\left( \Delta_d,\ldots,\Delta_1 \right)= \frac{1}{2m^2} \sum_{j=1}^m \sigma_j^2 \left( \Delta_d, \ldots, \Delta_1 \right).
\end{equation}
All the theorems stated in Subsection \ref{subsec:Inner for matrix-valued functions} remain true for the modified functional $\widetilde{G}_{\varepsilon}$. Remarkably, our method can be applied to matrix polynomials with additional structures, as described in Section \ref{sec:Extension to structured}. Since this is possible also with the approach proposed in \cite{GnazzoGugl}, we compare the two methods in the structured case. The numerical experiments in Subsection \ref{subsec:special case_matrixpol} show that this novel approach leads to a speed-up in time, yielding to an improvement of the state-of-the-art for the structured case. To our knowledge, other existing methods \cite{DasBora,GiesHaral} are currently not implemented for dealing with the structured case. Nevertheless, we may still test the behaviour of our approach in the unstructured case, comparing with the methods in \cite{ByersHeMehr,DasBora,GiesHaral}. We illustrate this comparison in Subsection \ref{subsec:special case_matrixpol}.

\section{Computational issues}
\label{sec:Computational issues}

The computational aspects of the method require a more careful discussion. Firstly, the numerical implementation of the method may require a initial normalization. Indeed, it may be useful to perform a normalization if the determinant of the initial matrix-valued function has a large or also small modulus. This is equivalent to work with a relative error. The normalization we propose is the following. Consider a discrete set of points $\Xi \subseteq \partial D$. As explained in Section \ref{subsec:Reformulation}, we may consider as $D$ the unit disk in the complex plane. Therefore, we select a finite number of distinct points in $\partial D$:
\[
\Xi := \left\lbrace z_j =e^{\frac{2\pi i }{p}j}, \; j=1,\ldots,p \right\rbrace.
\]
We scale the coefficient matrices $A_i$ for $i=1,\ldots,d$ dividing them by a certain factor $\alpha$ and we obtain new coefficients $\widetilde{A}_i$. The normalized matrices $\widetilde{A}_i$, $i=1,\ldots,d$ are taken in such a way that
\begin{equation*}
   \max_{j=1,\ldots,p} \left| \det \left( \sum_{i=1}^d f_i(z_j) \widetilde{A}_i \right) \right|=1.
\end{equation*}
The effective distance to singularity for $\mathcal{F}$  is taken multiplying the distance to singularity for $\widetilde{\mathcal{F}}$ by the factor $\alpha$. In our implementation of the approach, this normalization is obtained fixing a prescribed number of points $p$ and evaluating the determinant of $\sum_{i=1}^d f_i(z_j) A_i$ at the points $z_j \in \Xi$. Even if other choices of $\Xi$ could lead to a different normalization, we did not observe relevant changes in our numerical experiments.

As described in Subsections \ref{subsec:Inner for matrix-valued functions} and \ref{subsec:Outer for matrix-valued}, our approach relies on a nested optimization procedure. In the inner iteration, we fix the value of the perturbation $\varepsilon$ and optimize the functional $G_{\varepsilon}(\boldsymbol{\Delta})$  in \eqref{eq:functional_G}, over admissible $\boldsymbol{\Delta}$. In the outer iteration, we tune of $\varepsilon$ using a Newton-bisection method. Between these two iterations, the inner one is more delicate. Then, it is useful to add a few details about it. In particular, we seek the stationary points of the constrained gradient system \eqref{eq:gradient_system}, which are the local extremizers of the functional $G_{\varepsilon}$, as proved in Theorem \ref{thm:charact_minim}. To tackle this problem, we employ an explicit Euler method for the matrix ODE system and normalize the obtained perturbation $\boldsymbol{\Delta}$ at each step, in order to have unit Frobenius norm. It is often convenient to apply an adaptive choice of the step-size, for instance employing an Armijo-type line-search \cite{Luenberger}.
 
 The ODE technique may be particularly convenient in situations where $m \ll n$. Indeed, in this case, it can be proved that the matrix coefficients of the perturbation $\Delta \mathcal{F}$ in \ref{eq:det_at_points_section_numerical_sing_matrices} are rank $m$ matrices, see for instance the idea in \cite{Robol}. Then, it is possible to work separately on the factors of the low-rank matrices, by associating two differential equations, as proposed in \cite{GuglLubich}. A similar approach may be also employed when working with structured matrix-valued functions, following the idea in \cite{Sicilia}. Nevertheless, this optimization procedure is not the only possible choice for the inner iteration. For instance, higher order methods are available: one example is the Matlab toolbox \texttt{manopt}, available at \url{https://www.manopt.org/}. We tested this alternative proposal in our numerical tests, and we refer the reader to Subsection \ref{subsec:further_implementation_strategies} for a detailed comparison.

\subsection{Experimental choice of the number of needed points}
\label{subsec:experimental_points}

As we have explained in Section \ref{sec:problem setting}, the choice of the number $m$ and of the set of points $\left\lbrace \mu_j \right\rbrace_{j=1}^m$ represents a delicate step of the method. Although the theoretical background presented in Section \ref{sec:Two-level approach} relies on a fixed value of $m$, in our numerical implementation we opt for a successive adaptation of the number of needed points. This approach is useful to optimally approximate the function $\det\left( \mathcal{F}(\lambda) + \Delta \mathcal{F}(\lambda) \right)$. We propose an experimental approach based on the results on analytic functions provided by Theorems \ref{thm:Thref_approx_pol} and \ref{thm:Trapezoidal_Trefethen}. Our idea consists in a progressive adaptation of the number of points $m$ needed for a sufficiently accurate approximation of the analytic function $\det\left( \mathcal{F}(\lambda) + \Delta \mathcal{F}(\lambda) \right)$.

At each step of the approach, we have a function 
\begin{equation*}
\label{eq:function_f_k}
    \mathbf{F}_k(\lambda):=\det \left( \mathcal{F}(\lambda) + \Delta \mathcal{F}_k ( \lambda ) \right),
\end{equation*}
given by the computation of $\Delta \mathcal{F}_k(\lambda):=\sum_{i=1}^d f_i(\lambda) \Delta A_{i,k}$ at the $k$-th step of the outer iteration. For simplicity, we denote by $\mathbf{F}_0(\lambda)$ the determinant of $\mathcal{F}(\lambda)$. We fix a certain tolerance ${\rm{tol}}$ and we start the method choosing the smallest value of $m$ for which we have that the $m$-th coefficient of the Taylor expansion is 
\begin{equation}
\label{eq:condition_trap_rule_0}
    \left| a_{m}\left( \mathbf{F}_0(\lambda)\right) \right| \leq {\rm tol}.
\end{equation}
More in detail, we propose to approximate numerically the quantity $a_m \left( \mathbf{F}_0(\lambda) \right)$, by using the trapezoidal rule with points $e^{\frac{2\pi i}{N}j}$, as in Theorem \ref{thm:Trapezoidal_Trefethen}, using a sufficiently high number $N$. The smallest number of points that satisfies the condition \eqref{eq:condition_trap_rule_0}, denoted by $m_0$, will be the number of initial points and consequently
\begin{equation*}
    e^{\frac{2\pi i}{m_0}j}, \qquad j=1,\ldots,m_0,
\end{equation*}
will be the starting set of distinct points. Then, the idea consists in being able to provide a reliable approximation of the determinant $\mathbf{F}_k(\lambda)$, at each step of our method. This means being able to compute an accurate polynomial approximation $p(\lambda)$, using interpolant polynomials at the roots of the unit, as suggested by Theorem \ref{thm:Thref_approx_pol}. In this setting, it may happen that, at step $k$, by applying Theorem \ref{thm:Thref_approx_pol}, the function $\mathbf{F}_k(\lambda)$ requires an higher or lower number of points $m_k$, with respect to the one needed at the previous step. Therefore, we approximate numerically the Taylor coefficients of $\mathbf{F}_k(\lambda)$ and choose the smallest $\widetilde{m}$ such that
\begin{equation}
\label{eq:update_mtilde}
        \left| a_{\widetilde{m}}\left( \mathbf{F}_k(\lambda)\right) \right| \leq {\rm tol}.
\end{equation}
Then, as for the first iteration of the method, we could choose $m_k:= \widetilde{m}$ and consider the new set of points
\begin{equation*}
    e^{\frac{2\pi i}{m_k}j}, \qquad j=1,\ldots,m_k.
\end{equation*}
However, it would be convenient to avoid the re-computation of the value $m_k$ at each step of the iterative method. Indeed, if the coefficients of the functions $\Delta \mathcal{F}_k(\lambda)$ and $\Delta \mathcal{F}_{k-1}(\lambda)$ do not vary much, it would be reasonable to keep the amount of points $m_{k-1}$ for the iteration $k$. To this aim, we suggest a strategy able to detect a sudden change in the perturbation functions $\Delta \mathcal{F}_k$. We propose to monitor the behaviour of the method by adding a control on the relative increase of the norm of the perturbation matrices $\mathcal{A}_k:=\left[A_d+ \Delta A_{d,k} \, \ldots, A_1+ \Delta A_{1,k} \right]$. In particular, at each step of the outer iteration, we compute the quantity
\begin{equation}
\label{eq:ratio_norm}
  \mathcal{R}_k:=\frac{\| \mathcal{A}_k - \mathcal{A}_{k-1} \|_F}{\| \mathcal{A}_{k-1} \|_F},
\end{equation}
and if the ratio \eqref{eq:ratio_norm} $\mathcal{R}_k > \widetilde{\mbox{tol}}$ (in our experiments $\widetilde{\mbox{tol}}=0.001$ appears to be the most reliable), we perform a new computation of the number of needed points and update $m$. This additional control may avoid an high number of re-computations of the number of the needed points, allowing us to skip a few estimations of the value $m_k$, as illustrated in \eqref{eq:update_mtilde}.

\begin{remark}
    It is worth noticing that the design of our iterative method often leads to a different behaviour in the first steps and in the last ones. In details, we recall that we are employing a Newton-like method in order to solve the univariate root-finding problem, when we approach $\varepsilon^{\star}$ by the left. This may reflect to small changes of the computed variable $\varepsilon_k$ in the last iterations of the method, once we are converging to the solution $\varepsilon^{\star}$. For instance, one example of this possibility can be found in \cite[Table 7.2]{Sicilia}, where, in particular, the  values $\varepsilon_k$ vary in the order of $10^{-5}$ in the final steps. In our setting, a small variation in the value $\varepsilon$ leads to a small variation in the coefficients of the determinant of the matrix-valued function with coefficients $A_i + \varepsilon \Delta_i(\varepsilon)$, for $i=1,\ldots,d$. Then, we expect that at the very last iterations, it will not be necessary to change the number of needed points and Assumption \ref{assumption_points} is satisfied. This is also supported by our numerical experiments. Moreover, since our approach works with numerically singular matrices, we do not experience an exact singularity of the matrix-valued function. This means that the last steps of our method do not provide a number of points $m$ equal to $0$, since we do not reach the exact solution, but we only provide its numerical approximation.
\end{remark}

We now focus on strategies for the update of the number of needed points $m$ throughout the algorithm. Indeed, based on the relative increase $\mathcal{R}_k$ in \eqref{eq:ratio_norm}, we may need to repeat the computation of the number of points. To this end, we discuss two possible approaches for the update of the number of needed points $m_k$ at the $k$-th step of the procedure, where $\mathcal{R}_k > \widetilde{\mbox{tol}}$. A first course of action could be computing the new expected number of points $\widetilde{m}$, as in \eqref{eq:update_mtilde}, and set $m_k := \widetilde{m}$. This strategy would lead to a very accurate proposal, since, at almost each step, we would consider the exact number of distinct complex numbers $\mu_i$ that we need, as suggested by Theorem \ref{thm:Thref_approx_pol}. In our experience, moreover, this possibility may also lead to a speed-up in the numerical implementation of the method. We refer to this proposal as \emph{Subsequent} update of $m$.

Nevertheless, we also provide a second possibility, partially supported by the theory. Indeed, as proved in Theorem \ref{thm:derivativ_wrt_eps}, we have a monotonicity property for the functional $G_{\varepsilon}$. Therefore, it would be useful to keep this property into our numerical implementation of the method. To this end, we propose the following experimental strategy. If the new number $\widetilde{m}$ of points needed for a sufficiently accurate approximation of the determinant is greater than the double or smaller than the half of the previous one, that is $m_{k-1}$, we consider the new suggested set of points, increasing or decreasing its cardinality accordingly. This approach could lead to an increase of the computation cost, but it provides a safer strategy for the update of the number of points $m$. To explain this last statement, we separate two case:
\begin{enumerate}
    \item[\emph{(i)}] $\widetilde{m}  \leq \frac{1}{2} m_{k-1}$. In this case, we choose $m_k:= \frac{1}{2} m_{k-1}$;
    \item[\emph{(ii)}] $\widetilde{m} \geq 2 m_{k-1}$. In this case, we choose $m_k:= 2 m_{k-1}$.
\end{enumerate}
In the situation $\emph{(i)}$, at the end of the $(k-1)$-th step, we have that:
\[
G_{\varepsilon,  m_{k-1}} ( \boldsymbol{\Delta}(\varepsilon)) = \frac{1}{2} \sum_{j=1}^{m_{k-1}} \sigma_j^{2} (\boldsymbol{\Delta}(\varepsilon)) > \frac{1}{2} \sum_{j=1}^{m_{k}} \sigma_j^{2} (\boldsymbol{\Delta}(\varepsilon)) = G_{\varepsilon, m_{k}}  ( \boldsymbol{\Delta}(\varepsilon)),
\]
for each $\varepsilon $ in the interval $(\varepsilon_{k-1},\varepsilon^{\star})$. This procedure avoids possible increases of the functional $G_{\varepsilon}$ and this could be useful when we are approaching (local) extremizers of the functional.

In the situation \emph{(ii)}, we consider the case where the value $\varepsilon^{\star}$  has not been reached yet, that is $G(\varepsilon_{k-1}) > \omega$, with $\omega >0$. Then, for values of $\varepsilon$ in $(\varepsilon_{k-1}, \varepsilon^{\star})$, the following relation holds:
\[
\omega < G_{\varepsilon, m_{k-1}}(\boldsymbol{\Delta} (\varepsilon)) = \frac{1}{2} \sum_{j=1}^{m_{k-1}} \sigma_j^2(\boldsymbol{\Delta}(\varepsilon)) <  \frac{1}{2} \sum_{j=1}^{m_{k}} \sigma_j^2(\boldsymbol{\Delta}(\varepsilon))  = G_{\varepsilon, m_{k}} (\boldsymbol{\Delta}(\varepsilon)).
\]
This allows us to restrict the study of the functional $G_{\varepsilon,m_k}(\varepsilon)$ in the interval $(\varepsilon_{k-1},\varepsilon^{\star})$. We refer to this second proposal as \emph{Half-Double} update of $m$. In the numerical implementation of our method, we employ this second possibility.

Note that, in principle, for the \emph{Half-Double} technique, we could choose a different update. To illustrate this, let us consider case \emph{(i)}. Indeed, as long as $m_k \leq \frac{1}{2} m_{k-1}$, the monotonicity property still holds. Then, one could choose any subset $\mathcal{I} \subseteq \lbrace e^{\frac{2\pi i}{m_{k-1}}j} \rbrace_{j=1}^{m_{k-1}}$ of cardinality $m_k$. However, this approach would not guarantee the bound on the approximation of the determinant proposed in Theorem \ref{thm:Thref_approx_pol}, which requires equally spaced points on the unit disk.

Note that these proposed approaches are experimental strategies. We experimentally observed that the performances of the two techniques are comparable, in terms of the approximated distance to singularity and of the accuracy of the produced solution. We provide a comparison of the two ideas in Subsection \ref{subsec:further_implementation_strategies}.

In the following, in Algorithm \ref{Alg:choice of m}, we provide a pseudocode suggesting how to implement the choice of the number of points. Note that Lines from $8$ to $13$ provide an additional robustness check. We describe this procedure in the subsequent Subsection \ref{subsec:robustness_theory}. Finally, we provide a pseudocode for the iterative method in Algorithm \ref{Alg:distance to sing functions}. Our implementation of the method in Matlab is freely available at \url{https://github.com/miryamgnazzo/nearest-singular-matrix-valued-function}. We refer the reader to the codes for more implementation details and precise choices for the employed parameters. 

\algblock{Begin}{End}
\begin{algorithm}[!h]
\caption{Choice of number of points}
\label{Alg:choice of m}
\hspace*{\algorithmicindent} \textbf{Input}: scalar function $f(\lambda)$, minimum number of points $m_{\min}$, maximum number of points $m_{\max}$, tolerance $\mbox{tol}$
\\
\hspace*{\algorithmicindent} \textbf{Output}: number of needed points $\widetilde{m}$
\begin{algorithmic}[1]
\Begin
\State Set $m=m_{\min}$
\State Set $\left|a_m\right|= 2 \mbox{tol}$
\While{$\left|a_m \right| \geq \mbox{tol}$ and $m \leq m_{\max}$}
\State Approximate $a_m$ in \eqref{eq:coeff_an} for the function $f(\lambda)$, using trapezoidal rule 
\State $m=m+1$
\EndWhile
\State (Robustness check) Set $b_m$ equal to \eqref{eq:mean} 
\If{ $ b_m \leq \mbox{tol}$ }
\State Set $\widetilde{m}:=m$
\Else $\; m=m+1$
\State Repeat from Line $3$
\EndIf
\End
\end{algorithmic}
\end{algorithm}

\algblock{Begin}{End}
\begin{algorithm}[!h]
\caption{Approximation of $d_{{\rm{sing}}}$}
\label{Alg:distance to sing functions}
\hspace*{\algorithmicindent} \textbf{Input}: Matrices $A_d,\ldots,A_1$, tolerances $\mbox{tol}_i$ for $i=1,2,3$, threshold $\beta$, initial values $\varepsilon_0$, $\varepsilon_{\mbox{low}}$, $\varepsilon_{\mbox{up}}$\\ 
\hspace*{\algorithmicindent}\textbf{Output}: Upper bound for the distance $\varepsilon^*$, perturbation matrices $\Delta_d^*,\ldots, \Delta_1^*$
\begin{algorithmic}[1]
\Begin
\State Apply Algorithm \ref{Alg:choice of m} to $f^{0}\left( \lambda \right)$, with tolerance $\mbox{tol}_3$
\State $m_0:=\widetilde{m}$ from Algorithm \ref{Alg:choice of m}
\State Construct the set $e^{\frac{2 \pi i }{m_0}j}$, for $j=1,\ldots,m_0$
\State Initialize $m_{\rm old}=m_0$
\State Initialize $\Delta_d (\varepsilon_0), \ldots, \Delta_1(\varepsilon_0)$
\State Compute $g(\varepsilon_0), g'(\varepsilon_0)$
\State Set $k=0$
\While{$k \leq k_{\max}$ or $\left| \varepsilon_{\rm up}- \varepsilon_{\rm low} \right| > {\rm tol}_2$}
\If {$g \left( \varepsilon_k \right) > {\rm tol}_1$} 
\State $\varepsilon_{\rm low}=\max \left( \varepsilon_{\rm low}, \varepsilon_k \right)$
 \State Compute $\varepsilon_{k+1}$ using a Newton step
\Else
\State $\varepsilon_{\rm up}= \min \left( \varepsilon_{\rm up}, \varepsilon_k \right)$
 \State $\varepsilon_{k+1}=\left( \varepsilon_{\rm low} + \varepsilon_{\rm up} \right)/2$
\EndIf
\State Set $k=k+1$
 \If {$\varepsilon_k \not\in \left[ \varepsilon_{\rm low}, \varepsilon_{\rm up} \right]$}
 \State $\varepsilon_k= \left( \varepsilon_{\rm low} + \varepsilon_{\rm up} \right)/2$ 
 \EndIf
\State Compute $\boldsymbol{\Delta} \left( \varepsilon_k \right)$ solving the ODE \eqref{eq:gradient_system}
\State Compute $g\left( \varepsilon_k \right)$
\State Compute the ratio $\mathcal{R}_k$ \eqref{eq:ratio_norm}
\If{$\mathcal{R}_k \geq \beta$}
\State Apply Algorithm \ref{Alg:choice of m} to $f^{k}\left(\lambda \right)$, with tolerance $\mbox{tol}_3$
\State $m_{\rm new}:=\widetilde{m}$ from Algorithm \ref{Alg:choice of m}
\If{$m_{\rm  new} \geq 2 m_{\rm old}$}
\State Set $m_{\rm new}:=2m_{\rm old}$
\Else \If{$m_{\rm new} \leq m_{\rm old}/2$}
\State Set $m_{\rm new}:=\lfloor \frac{m_{\rm old}}{2}\rfloor$
\EndIf
\EndIf
\State Construct the set $e^{\frac{2 \pi i }{m_{\rm new}}j}$, for $j=1,\ldots,m_{new}$
\State Set $m_{\rm old}=m_{\rm new}$
\EndIf
\EndWhile
 \State Set $\varepsilon^*= \varepsilon_k$
 \State Set $\boldsymbol{\Delta}^*=\boldsymbol{\Delta} \left( \varepsilon_k \right)$
\End 
\end{algorithmic}
\end{algorithm}

\subsection{Robustness of the numerical method}
\label{subsec:robustness_theory}
The main point of the approach we propose consists in the possibility of substituting the condition on the determinant $\det \left( \mathcal{F}(\lambda) + \Delta \mathcal{F}(\lambda) \right)$ by a discrete condition on the set of complex numbers $\lambda = \mu_j$, for $j=1,\ldots,m$. One important issue to analyze is the stopping criterion in choosing the number of needed points $m$ in Algorithm \ref{Alg:choice of m}. Indeed, the choice performed in Algorithm \ref{Alg:choice of m} involves the numerical approximation of the $m$-th coefficient of the Taylor series $a_m$. Thanks to Theorem  \ref{thm:Thref_approx_pol}, we get that the interpolation error goes to zero, decaying as $\rho^{-m}$, for some $\rho>1$, for $m\to \infty$. However, even if we expect a fast decay of the values $\left| a_m\right|$ since we are considering entire functions, it is appropriate to include an additional procedure to check the robustness of the process. In detail, we suggest checking that the interpolant polynomial $p(\lambda)$ at the points $\mu_j = e^{\frac{2\pi i}{m}j}$, for $j=1,\ldots,m$ is indeed an appropriate approximation of the function $f(\lambda) = \det \left(\mathcal{F}(\lambda)+ \Delta \mathcal{F}(\lambda)\right)$. In Algorithm \ref{Alg:choice of m}, Line $8$, we consider the computed value $m$, and compute the coefficients of the polynomial $p(\lambda)$ of degree $m-1$, such that
\[
p(\mu_j) = f(\mu_j), \; \mu_j = e^{\frac{2\pi i}{m}j},\mbox{ for } j=1,\ldots,m.
\]
This step could be implemented in a relatively cheap way, since the evaluations $f(\mu_j)$ have already been computed in the previous steps of the approach, and the derivation of the coefficients of $p(\lambda)$ can be done employing the FFT, with a computational cost of $\mathcal{O}(m \log m)$ \cite{CooleyTukey}. Afterward, we consider a discrete set of points $\Omega \subseteq D$ (that does not include the points $\left\lbrace \mu_j \right\rbrace_{j=1}^m$), where $D$ is the unit disk in the complex plane, and we evaluate the quantity
\begin{equation}
  b_m := \max_{\omega \in \Omega} \left|p(\omega) - f(\omega) \right|.
    \label{eq:mean}
\end{equation}
If this value is smaller than the tolerance tol prescribed in Algorithm \ref{Alg:choice of m}, we can accept it, otherwise, we increase $m$, and repeat the whole procedure.

The described technique can be included in our novel approach: the dominant part of the computational cost does not vary. In Subsection \ref{subsec:robustness}, we provide numerical experiments, considering this check on the robustness of the criterion.

\section{Numerical examples}
\label{sec:Numerical examples matrix valued}

\begin{ex}
\label{ex:time delay nlevp}
We consider the characteristic equation of a time-delay system with a single delay and constant coefficients in the form
\begin{equation*}
    \mathcal{F}_1(\lambda)=-\lambda A_2+ \exp{(-\lambda)} A_1 + A_0,
\end{equation*}
where we have a $3 \times 3$ matrix-valued function with coefficients
\begin{equation}
\label{eq:time delay coeff_ nlevp}
    A_1=\left[ \begin{array}{c c c}
       0  & 1 & 0 \\
        0 & 0 & 1 \\
       -a_0 & -a_1 & -a_2 \\ 
    \end{array} \right], \quad   A_0=\left[ \begin{array}{c c c}
       0  & 0 & 0 \\
        0 & 0 & 0 \\
       -b_0 & -b_1 & -b_2 \\ 
    \end{array} \right],
\end{equation}
and $A_2:=I_{3}$ is the identity matrix of size $3 \times 3$. This example is taken from \cite{JarMich} and it is part of the \texttt{nlevp} collection of nonlinear eigenvalue problems proposed in \cite{NLEVP} (problem \texttt{time\_delay}, page $19$). As explained in Section \ref{sec:Computational issues}, we apply the method to the normalized version $\widehat{\mathcal{F}}_1(\lambda)$ of the matrix-valued function $\mathcal{F}_1(\lambda)$. The resulting matrix-valued function is the approximation of the closest singular matrix-valued function for $\widehat{\mathcal{F}}_1(\lambda)$.

We start computing the distance to singularity for $\widehat{\mathcal{F}}_1(\lambda)$ without including the additional information about the sparsity pattern. Nevertheless, since the matrices involved are real, it is reasonable to ask for real perturbations in $\Delta \widehat{\mathcal{F}}_1(\lambda)$. Then we apply the method adding the constraint of real perturbations. Setting for the tolerance $\rm{tol}$ in Algorithm \ref{Alg:choice of m} equal to $10^{-12}$, the initial number of points is $15$ and in this case we do not need to recompute them. We set ${\rm tol}_1=15\times 10^{-8}$ and ${\rm tol}_2 =10^{-6}$ in Algorithm \ref{Alg:distance to sing functions}. The approximated distance to singularity with real perturbations is equal to $0.0450$ (with $4$ digits).

We can check the quality of the singularity at a dense set of points: consider the points $\left\lbrace x_j+i y_j \right\rbrace$ given by \texttt{[X,Y] = meshgrid(-0.9:0.01:0.9)}, we evaluate the smallest singular value of $\widehat{\mathcal{F}}_1(\lambda)+ \Delta \widehat{\mathcal{F}}_1(\lambda)$ at these points. In this case, the maximum value is $ 4.1935 \times 10^{-3}$ and a minimum value is $7.0389 \times 10^{-7}$. In Figure \ref{fig:contour1}, we plot the evaluations of the smallest singular value of $\widehat{\mathcal{F}}_1(\lambda)+ \Delta \widehat{\mathcal{F}}_1(\lambda)$ at the points obtained using \texttt{meshgrid}.

\begin{figure}[h!]
    \centering
    \includegraphics[width=0.65\textwidth]{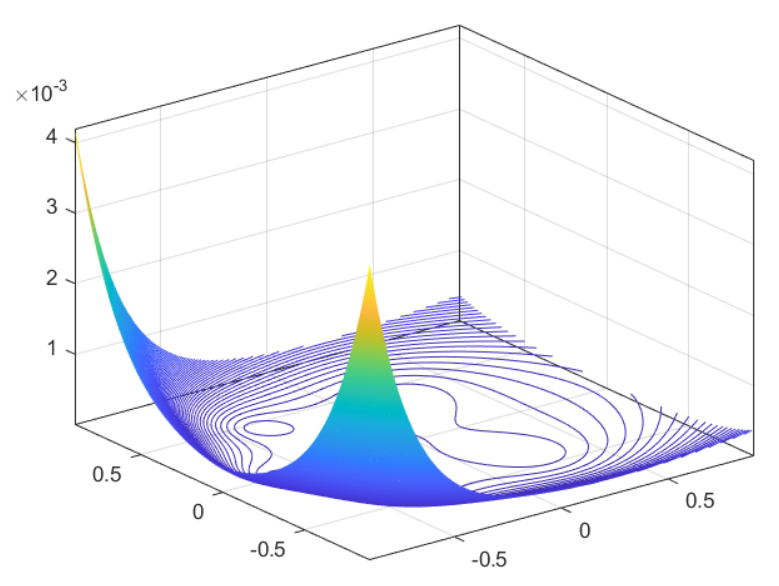}
    \caption{Contour plot for the smallest singular value of $\widehat{\mathcal{F}}_1(\lambda)+ \Delta \widehat{\mathcal{F}}_1(\lambda)$ at the grid of points obtained with {\tt meshgrid}.}
    \label{fig:contour1}
\end{figure}

In this example the final perturbations $\Delta A_2, \Delta A_1, \Delta A_0$ are full matrices and do not respect the initial sparsity pattern of the coefficients. Indeed, if we want to consider the structure induced by the sparsity pattern, we can extend the method as in Section \ref{sec:Extension to structured}, with the introduction of an additional constraint on the optimization problem. In this setting, for instance, the sparsity pattern induced by the matrices $A_2,A_1,A_0$, respectively, is
\begin{equation*}
    \left[ \begin{array}{c c c}
        * & 0 & 0 \\
        0 & * & 0 \\
        0 & 0 & * \\
    \end{array} \right],  \quad \left[ \begin{array}{c c c}
        0 & * & 0 \\
        0 & 0 & * \\
        * & * & * \\
    \end{array} \right], \quad  \left[ \begin{array}{c c c}
        0 & 0 & 0 \\
        0 & 0 & 0 \\
        * & * & * \\
    \end{array} \right].
\end{equation*}
Here we use the same tolerances and the same number of points $m=15$ considered for the case without sparsity constraints and the method does not need a recomputation of the number of points. The structured distance to singularity (up to $4$ digits) is $0.0833$. In an analogous way, we produce an evaluation of the quality of the singularity, which provides a maximum of the smallest singular value of $\widehat{\mathcal{F}}_1(\lambda)+ \Delta \widehat{\mathcal{F}}_1(\lambda)$ of $1.4868 \times 10^{-3}$ and a minimum value of $3.4674 \times 10^{-7}$, using the grid provided before.

A third possibility may be preserve the structure associated with the time-delay equation. In particular, we may ask for perturbations in the form
\begin{equation*}
   \Delta A_2= \left[ \begin{array}{c c c}
        0 & 0 & 0 \\
        0 & 0 & 0 \\
        0 & 0 & 0 \\
    \end{array} \right],  \quad \Delta A_1= \left[ \begin{array}{c c c}
        0 & 0 & 0 \\
        0 & 0 & 0 \\
        * & * & * \\
    \end{array} \right], \quad  \Delta A_0= \left[ \begin{array}{c c c}
        0 & 0 & 0 \\
        0 & 0 & 0 \\
        * & * & * \\
    \end{array} \right].
\end{equation*}
This choice preserves the $0-1$ structure arising in the time delay equation and only perturbs the coefficients $a_i, b_i$ for $i=0,1,2$ in \eqref{eq:time delay coeff_ nlevp}. In this setting, the problem seems very far from a singular matrix-valued function since its approximated distance to singularity is $84.47756$. The evaluations at the points provided by \texttt{meshgrid} has maximum value equal to $4.2618 \times 10^{-4}$ and minimum value of $3.1247 \times 10^{-7}$. Setting the threshold $\beta=10^{-3}$, the method often recomputes the number of needed points, but in the end the number of needed points remains equal to $m=15$ for the whole computation.

\end{ex}

\begin{ex}
\label{ex:different_tau}
Consider the following $2 \times 2$ randomly generated matrices in MATLAB:
\texttt{n=2;rng(2); A$_2$=rand(n); A$_1$=randn(n); A$_0$=randn(n)}.
We construct a list of examples with different delays, in the following way:
    \begin{equation*}
        \mathcal{F}_{\tau} \left( \lambda \right)= \lambda A_2  + e^{-\tau \lambda} A_1 + A_0,
    \end{equation*}
    and test the method with several different values of $\tau$. As in Example \ref{ex:time delay nlevp}, the matrix-valued function $\mathcal{F}_{\tau}$ are normalized in order to have determinant equal to one on the boundary of the complex unit disk. For every run of the method, we ask for a precision of $\mbox{tol}=10^{-12}$ in the approximation of the coefficient of the Taylor series in Algorithm \ref{Alg:choice of m}. The final precision required for the approximation of the functional $G_{\varepsilon}$ is $\rm{tol}_1=m \cdot 10^{-10}$ and $\rm{tol}_2=10^{-6}$, with $\rm{tol}_1, \rm{tol}_2$ defined in Algorithm \ref{Alg:distance to sing functions}. Note that in this and all the other numerical examples, we choose as tolerance $\mbox{tol}_1$ a multiple of the number of points $m$. This is done because, in the numerical implementation of the method, this choice allows us to change the accuracy that we require on the functional $g(\varepsilon)$ (that depends on $m$), as we increase or decrease the number $m$ of needed points. In Table \ref{tab:table_multilple_delays}, for different values of $\tau$, we provide the approximated distance to singularity, the maximum number of needed points $m$, the maximum and the minimum value of the evaluations of the smallest singular value of the perturbed matrix-valued function at the set of points obtained with \texttt{meshgrid} and the number of iterations needed for reaching the required accuracy.

\begin{table}[h!]
   \caption{Results for $\mathcal{F}_{\tau}$ with different values of $\tau$.}
    \centering
    \begin{tabular}{|c|c|c|c|c|c|}
    \hline
       $\tau$  & Distance &N. points & Max $\sigma_{min}$ & Min $\sigma_{min}$ & Iter. \\
       \hline \hline
       $0.5$ & $0.8012$ & $15$ & $ 1.4032 \times 10^{-5}$ & $1.7917 \times 10^{-8}$ & $26$ \\
       $1$ & $0.7034$ & $19$ & $1.6654 \times 10^{-5}$ & $1.7759 \times 10^{-10}$ & $19$ \\
        $2$  & $0.4701$ &$26$  & $2.6116 \times 10^{-5}$ &  $5.2978 \times 10^{-8}$ & $16$ \\
        $3$ & $0.1208$ & $31$ & $4.2658 \times 10^{-5}$ & $8.1547\times 10^{-8}$ & $15$\\
      \hline
    \end{tabular}
    \label{tab:table_multilple_delays}
\end{table}    

\end{ex}

\begin{ex}
We now consider the following matrix-valued function:
\begin{equation*}
\mathcal{A}(\lambda)=\lambda A_2 + e^{-\tau_1 \lambda} A_1 + e^{\tau_2 \lambda} A_0, \quad \tau_i >0, \; i=1,2.
\end{equation*}
This kind of matrix-valued function may arise dealing with systems of advanced retarded differential equations, also known as mixed-type functional differential equations, see for instance \cite{Myshkis}. The study of this class of equations can be found in the context of quantum mechanics, as it is shown in \cite{Rodriguez} for quantum photonic circuits.
We can compute the distance to singularity for the matrix-valued function $\mathcal{A}(\lambda)$. In this example, we choose $\tau_1=\tau_2=1$ and we considered three random matrices of size $3 \times 3$, randomly generated in MATLAB fixing \texttt{rng(1)}. We use the following sparsity patterns:
\begin{equation}
P_2=\left[ \begin{array}{c c c}
 *   & *  &   0\\
 0   &  *  &   0\\
  0  &   0   &  *
\end{array} \right], P_1=\left[ \begin{array}{c c c}
 *   & 0  &  *\\
 0   &  *  &   0\\
  0  &   0   &  *
\end{array} \right], P_0=\left[ \begin{array}{c c c}
 0  & 0  & *\\
 0  &  *  &  0\\
 *  &  0  &  0
\end{array} \right],
\end{equation}
and projecting the random matrices of these sparsity patterns, we obtain respectively $A_2,A_1,A_0$:
\begin{align*}
A_2&=\left[ \begin{array}{c c c}
-6.4901 \times 10^{-1}  & -1.1096 & 0\\
 0 & -8.4555 \times 10^{-1} &  0\\
0 & 0 & -1.9686\times 10^{-1}
\end{array} \right] \\
A_1&=\left[ \begin{array}{c c c}
5.8644 \times 10^{-1} &  0 & 1.6681\times 10^{-1}\\
  0 & 8.7587 \times 10^{-1} & 0\\
  0 & 0 & -1.2701
 \end{array} \right],\\
A_0&=\left[ \begin{array}{c c c}
   0 & 0 & -1.8651\\
   0  & 1.7813 &  0\\
  -2.7516 \times 10^{-1}  & 0  & 0
 \end{array} \right].
\end{align*}
The method requires $m=23$ points and Algorithm \ref{Alg:distance to sing functions} does not need to change $m$ during the performance. Following the notations of the paper, we choose $\mbox{tol}=10^{-12}$ in Algorithm \ref{Alg:choice of m}, threshold $\beta=10^{-3}$, $\mbox{tol}_1= m\cdot 10^{-9}$ and $\mbox{tol}_2=10^{-6}$ in Algorithm \ref{Alg:distance to sing functions}. The distance to singularity for $\mathcal{A}(\lambda)$ with the sparsity constraints gives an approximation of $\varepsilon_{\mathcal{S}}=3.0032\times 10^{-1}$ and the computation ends in $15$ iterations. The maximum of the determinant of the perturbed matrix-valued function on the unit disk is $7.3342 \times 10^{-5}$ and the minimum is $2.3567 \times 10^{-6}$. 
\end{ex}

\subsection{Special case: matrix polynomials}
\label{subsec:special case_matrixpol}

In this Subsection, we compare the novel approach with other existing methods. In particular, we test our method with the one proposed in \cite{GnazzoGugl}, for the case of numerical approximation of the structured distance to singularity for matrix polynomials. For this comparison, see Examples \ref{ex:mirror} and \ref{ex:damped_beam}. Since the methods in \cite{ByersHeMehr}, for matrix pencils, and in \cite{GiesHaral,DasBora}, for matrix polynomials, are able to compute the unstructured distance to singularity, we use them as benchmark and show that the performance of our method matches their results, as shown in Examples \ref{ex:comparison_bora_cubic}, \ref{ex:comparison_haraldson} and \ref{ex:comparison_byers}. In addition, in Examples \ref{ex:mirror} and \ref{ex:damped_beam}, we show that the method does not benefit from choices of the number of points $m$, that are different from the one proposed by the novel approach. In particular, we show that choosing a fixed number of points $\tilde{m} >  m$ leads to an increase of the CPU time, without bringing relevant improvement in the precision of the method.

For Examples \ref{ex:mirror} and \ref{ex:damped_beam},  the numerical implementation of the method has been done considering the functional \eqref{eq:scaled_functional_widetildeG}. Moreover, we employ the normalization on the matrix coefficients described at the beginning of Section \ref{sec:Computational issues}.

\begin{ex}
\label{ex:mirror}
We consider the example \texttt{mirror} in the \texttt{nlevp} package. This is a quartic matrix polynomial of size $9 \times 9$. We compute the approximate distance to singularity taking as additional structure the sparsity pattern induced by the matrix coefficients.

We use the functional $\widetilde{G}_{\varepsilon}$ asking for a tolerance $\mbox{tol}_1$ in the order of $10^{-11}$ on it. It is possible to prove, using a symbolic tool for the computation of the determinant of the matrix polynomial, that in this case the degree of the determinant is $27$. In Table \ref{tab:Comparison_mirror}, we summarize the results obtained comparing the new methodology and the approach with a fixed amount of points, including the choice of $m$ suggested by the fundamental theorem of the algebra. More in details, we compare the approximated distance to singularity, the CPU time, the number of iterations needed for reaching the required accuracy. Moreover, both considering a variable choice of the number of points and a fixed one, we evaluate the 
\begin{equation*}
   \sigma_{\min}(\lambda_j):=\sigma_{\min} \left( \mathcal{Q}(\lambda_j) + \Delta \mathcal{Q}(\lambda_j) \right),
\end{equation*}
where $\mathcal{Q}(\lambda)$ is the original matrix polynomial and $\Delta \mathcal{Q}(\lambda)$ is the perturbation obtained with the methods, which makes the polynomial numerically singular, and $\lambda_j:=x_j + iy_j$ are the pairs in the intersection between 
$\left\lbrace \lambda_j : x_j \in \mbox{\texttt{[-1:0.01:1]}}, y_j \in \mbox{\texttt{[-1:0.01:1]}} \right\rbrace$ and the unit complex disk $\left\lbrace \lambda_j \in \mathbb{C}: \right. \allowbreak \left. \left| \lambda_j \right| \leq 1 \right\rbrace$. This chosen set of points $\lambda_j$ is made of $31417$ different complex points.

\begin{table}[!h]
   \caption{Comparison for the {\tt mirror} matrix polynomial.}
    \centering
    \begin{tabular}{|c|c|c|c|c|}
    \hline
         & Novel Approach  & Th. Algebra & Choice 1 & Choice 2\\
         \hline \hline
        Distance & $3.8548 \times 10^{-4}$  & $3.7441 \times 10^{-4}$ & $3.8573 \times 10^{-4}$ & $3.7832 \times 10^{-4}$\\
        \hline
        Num. points & $12$ & $28$ & $20$ & $25$\\
        \hline
        Time & $36.5683$ & $228.1456$ & $134.0780$ & $198.0705$\\
        \hline
        Iter. & $5$ & $6$ & $6$ & $6$\\
        \hline
        Max. $\sigma_{\min}(\lambda_j)$ & $3.4846 \times 10^{-5}$ & $7.3581 \times 10^{-5}$ & $6.2990 \times 10^{-5}$ & $6.9922 \times 10^{-5}$\\
        \hline
    \end{tabular}
    \label{tab:Comparison_mirror}
\end{table}
We impose as tolerance ${\rm{tol}}=10^{-12}$ in Algorithm \ref{Alg:choice of m} and $\mbox{tol}_2=10^{-6}$, threshold $\beta=10^{-3}$ in Algorithm \ref{Alg:distance to sing functions} of Section \ref{sec:Computational issues}.

The results in Table \ref{tab:Comparison_mirror} show that a raise in the number of needed points $m$ (choosing for instance $m=20,25$) leads to a subsequent increase of the computational time, while the accuracy of the solution does not improve, as it can be noticed from the last line of the table.

It could be useful to a posteriori certify that the computed matrix polynomial is indeed close to being singular, and, therefore, the approximated distance to singularity is acceptable. To this end, we refer the reader to \cite[Section 6]{GnazzoGugl}, where a posteriori upper bound checks if the computed matrix polynomial is acceptable as numerically singular. In this example, for instance, this criterion assures that the polynomial $\mathcal{Q}(\lambda) + \Delta \mathcal{Q}(\lambda)$, computed via the novel approach, has a distance equal to $3.7459\times 10^{-5}$.
\end{ex}

\begin{ex}
    \label{ex:damped_beam}
We consider the quadratic matrix polynomial \texttt{damped\_beam}, taken from the \texttt{nlevp} collection \cite{NLEVP}. This quadratic eigenvalue problem arises in the vibration analysis of a beam and it is scalable. Here we consider coefficients of size $20 \times 20$ and compute the distance to singularity with the additional constraint of the sparsity pattern. As in the previous examples, we normalize the initial coefficients of the polynomial accordingly to the proposal in Section \ref{sec:Computational issues}. The parameters employed in Algorithm \ref{Alg:distance to sing functions} are the ones provided in Example \ref{ex:mirror}. Table \ref{tab:Comparison_mirror} collects the results of the numerical comparison.

\begin{table}[h!]
  \caption{Comparison for {\tt damped\_beam}.}
    \centering
    \begin{tabular}{|c|c|c|c| c|}
    \hline
     & Novel Approach & Th. Algebra & Choice 1 & Choice 2 \\
     \hline \hline
     Distance & $ 3.8974 \times 10^{-3}$ & $ 3.8209 \times 10^{-3}$  & $3.8671 \times10^{-3}$ & $ 3.8378 \times 10^{-3}$\\
     \hline
     Num. points & $5$ & $41$ & $15$ & $30$ \\
     \hline
     Time & $66.0911$ & $409.6935$ & $154.6446$  & $295.5600$\\
     \hline
     Iter. & $9$ & $8$ & $9$ & $8$\\
     \hline
     Max. $\sigma_{\min}(\lambda_j)$ & $2.7397 \times 10^{-5}$ & $6.9578 \times 10^{-5}$ & $4.4097 \times 10^{-5}$ & $6.0273 \times 10^{-5}$\\
     \hline
    \end{tabular}
    \label{tab:Comparison_dampedbeam}
\end{table}
\end{ex}

From the results provided in Examples \ref{ex:mirror} and \ref{ex:damped_beam}, we observe that the CPU time is noticeably lower if we use the novel method. Note that the precision of both the methodologies is comparable, even if a slightly smaller upper bound for the distance to singularity is obtained by applying the method that exploits the fundamental theorem of the algebra. This decrease of the elapsed time still holds in situations where the determinant of the considered matrix polynomial has not the maximum possible degree, as shown in Example \ref{ex:mirror}.

\begin{ex}
\label{ex:comparison_bora_cubic}
In order to test the reliability of the novel approach for the matrix polynomials, we compare with the method in \cite{DasBora}. Consider the cubic matrix polynomial
\begin{small}
\[
\lambda^3 \begin{bmatrix}
   -1.9867 & 1.28 \\
    0.6097 & -0.1477
\end{bmatrix} + \lambda^2 \begin{bmatrix}
    0.6346  & 0.9689\\
     0.6252  & -0.0649
\end{bmatrix} + \lambda \begin{bmatrix}
   0.8837 & 0.9969 \\
 0.219  & 0.0259 
\end{bmatrix} + \begin{bmatrix}
    -0.1414  & -0.149\\
     1.1928  & 0.9702
\end{bmatrix},
\]
\end{small}
taken from Example $9.3$ in \cite{DasBora}. We run the method in Algorithm \ref{Alg:distance to sing functions}, choosing parameters $\beta = 10^{-3}$, $\mbox{tol}_1 = 10^{-12}$, $\mbox{tol}_2 =10^{-6}$ and $\mbox{tol}_3 =10^{-12}$. In order to compare the results we do not impose additional structures and compute the unstructured distance to singularity. Using $m=7$, our method produces an approximate distance to singularity equal to $1.676540378893858$, which is coherent with the results in Table $3$ in \cite{DasBora}, obtained using BFGS and \texttt{globalsearch.m}. Moreover, computing the values $\sigma_{\min}(\lambda_j)$ on the grid of points proposed in Examples \ref{ex:mirror} and \ref{ex:damped_beam}, we obtain that the maximum value is $6.0369 \times 10^{-6}$ and the minimum is $1.4914 \times 10^{-8}$.
\end{ex}

\begin{ex}
\label{ex:comparison_haraldson}
We consider the matrix polynomial
\begin{multline*}
\lambda^2 \begin{bmatrix}
-0.0376 & 0.107 & 0.293\\
0.003 & -0.14914 & -0.2859\\
0.0577 & 0.1455 & 0.231
\end{bmatrix} + \lambda \begin{bmatrix}
    -0.2122 & 0.363 & -0.1385\\
0.18027 &-0.151 & 0.469\\
-0.106 & 0.212 & -0.1514
\end{bmatrix} + \\ + \begin{bmatrix}
0.0278 & 0.0563 & 0.1141\\
-0.1758 & 0.327 & -0.173\\
-0.056 & 0.0321 & -0.075
\end{bmatrix},
\end{multline*}
proposed \cite[Section $5.2$]{GiesHaral} and \cite[Example $9.8$]{DasBora}, and compute an approximation of the unstructured distance to singularity. Setting the parameters as in Example \ref{ex:comparison_bora_cubic}, the novel approach requires $m=7$ and produces an approximate distance to singularity equal to $2.660288767643578 \times 10^{-2}$. The distance is coherent with the one proposed in \cite{GiesHaral} and \cite{DasBora}, since it coincides with them up to the fifth decimal digit. The computation of $\sigma_{\min}(\lambda_j)$ over the usual grid pf points produces a maximum value equal to $5.3371 \times 10^{-5}$ and a minimum value equal to $9.0167 \times 10^{-8}$. 
\end{ex}

\begin{ex}
\label{ex:comparison_byers}
    We consider the matrix pencil proposed in \cite[Example 5]{ByersHeMehr}, that is the pencil $B_n - \lambda B_n$, where
    \[
    B_n = \begin{bmatrix}
        1 & -1 & -1 & \cdots & -1 \\
        & 1 & -1 & -1 & \cdots \\
         &  & \ddots & \vdots & \vdots \\
         & & & 1 & -1 \\
         & & & & 1
    \end{bmatrix} \in \mathbb{R}^{n \times n}.
    \]
The matrix $B_n$ is an ill-conditioned triangular matrix \cite{GoluVanl13}. We choose $n=4$ and run our approach on the pencil, without imposing any additional structure, in order to be able to compare with \cite{ByersHeMehr}. Our method employs $5$ points and provides an approximation of the unstructured distance to singularity for the pencil in $14$ iterations. For this example, in \cite{ByersHeMehr}, the authors provide a closed formula for the distance to singularity, which provides the result $2.582980 \times 10^{-1}$ (up to $6$ digits). Our approach computes an approximate distance of $2.582949 \times 10^{-1}$, which is coherent with the exact one, since the distance of our solution to the exact one is in the order of $10^{-6}$.
\end{ex}

\subsection{Further implementation strategies}
\label{subsec:further_implementation_strategies}

As described in Section \ref{sec:Computational issues}, the numerical implementation of our approach may lead to different challenges and requires a careful study. Indeed, alternative proposals may lead to a speed-up of the method. In this Subsection, we provide a few further implementation strategies that could improve the method. However, an optimised version of our code is currently in progress.

\emph{Experimental choice of the number of points $m$}: as described in Subsection \ref{subsec:experimental_points}, we identify two different ways of updating the choice of the number of points $m$ in our approach. Consider again Example \ref{ex:different_tau}, with $\tau= 1$. Figure \ref{fig:Plot_points_m(eps)} shows that the number of considered points may vary in the set $\left\lbrace 16, 17, 18, 19 \right\rbrace$. We compare the possible approaches for the update of $m$ at each step of the method. The choice \emph{Half-Double} does not lead to a change of the number of points, instead, the approach \emph{Subsequent} changes the number of points $m$ three times during the process. The results obtained with these proposals are the same in terms of computed distance, number of employed iteration and accuracy of the solution (measured as in the previous examples on a prescribed set of points $\lambda_j$).

\begin{table}[h!]
    \caption{Comparison for update strategies for $m$, for Example \ref{ex:different_tau}.}
    \centering
    \begin{tabular}{|c|c|c|}
    \hline
         & Half-Double & Subsequent \\
         \hline
         \hline
       Distance  & $0.7034$ & $0.7034$ \\
       \hline
       Maximum $m$  & $19$ &  $19$ \\
       \hline
       Time  & $103.9930$ & $94.8082$\\
       \hline
       Iter. & $19$ & $19$ \\
       \hline
       Max. $\sigma_{\min}$  & $1.6654\times 10^{-5}$ & $1.6495\times 10^{-5}$\\
       \hline
       Min. $\sigma_{\min}$  & $1.7759 \times 10^{-10}$ & $3.2269\times 10^{-9}$\\
       \hline
    \end{tabular}
    \label{tab:different_update_m}
\end{table}

\emph{Inner iteration with} \texttt{manopt}: in the current version of the inner iteration, we find the local minimizers of the functional computing the stationary points of the ODE system \eqref{eq:gradient_system}, using the characterization in Theorem \ref{thm:charact_minim}. As described in Section \ref{sec:Computational issues}, we employ an explicit Euler method. Here, we explore an alternative proposal for the numerical implementation, using the Matlab package \texttt{manopt}, a toolbox for Riemannian optimization \cite{Boumal}. In particular, it is possible to substitute the gradient system approach described in Section \ref{subsec:Inner for matrix-valued functions} with a more sophisticated solver, such as the Riemannian trust-region method available in \texttt{manopt}. We tested the behavior of this proposal, comparing the results obtained for Example \ref{ex:time delay nlevp}, asking only for real perturbations $\Delta_i$, without any additional structure.

\begin{table}[!h]
 \caption{Comparison of the method among the approach in Subsection \ref{subsec:Inner for matrix-valued functions} and {\tt manopt} for Example \ref{ex:time delay nlevp}.}
    \centering
  \begin{tabular}{|c|c|c|}
\hline
     & ODE & Manopt \\
     \hline \hline
  Distance   & $0.0428$  & $0.0417$ \\
  \hline
    Iter & $22$ & $25$ \\
    \hline
    Max. $\sigma_{\min}$ & $1.9875 \times 10^{-5}$ & $ 2.0274\times 10^{-5}$\\
      \hline
    Min. $\sigma_{\min}$ & $4.3760 \times 10^{-9}$ & $7.4665 \times 10^{-9}$\\ 
    \hline
\end{tabular}
    \label{tab:my_label}
\end{table}

Our (non-optimized) code version with the Riemannian trust-region method provides a computed distance to singularity comparable to one obtained with the ODE approach. The elapsed time for the \texttt{manopt} implementation is roughly half that the ODE method employs. This may suggest that an optimization and a speed-up of our current code is possible, and using second-order methods could improve the performance of our technique.

\subsection{Robustness of the approach}
\label{subsec:robustness}

As anticipated in Subsection \ref{subsec:robustness_theory}, we may include an additional verification of the robustness of the proposed approach, taking into account the extra check on a prescribed set of points $\Omega$. In the numerical implementation of our method, we perform the computation of the quantity in \eqref{eq:mean}, by considering discrete sets $\Omega_k$, for $k=1,2,\ldots$ for which the cardinalities satisfy $\left| \Omega_{k+1} \right| = 2\left|\Omega_k \right|$, and stopping the procedure when the quantity \eqref{eq:mean} for $\Omega_k$ and $\Omega_{k+1}$ are comparable.

Consider again Example  \ref{ex:different_tau}, for $\tau =1$. We run the approach, checking whether the error on the discrete set of points $\Omega$ is lower or equal to the tolerance tol. We perform this additional check at the steps for which the re-computation of the points is needed, following the approach described in Algorithm \ref{Alg:choice of m}. In Figure \ref{fig:check_discr}, we plot the quantity $b_m$ in \eqref{eq:mean} and the quantity $\left| a_{m}(\mathbf{F}_k(\lambda))\right|$ in \eqref{eq:update_mtilde}, computed every time that we need to re-compute the number of points, comparing them with the required tolerance $\mbox{tol}=10^{-12}$. In this Example, the quantity \eqref{eq:update_mtilde} and the quantity \eqref{eq:mean} provide a similar behavior, since they are always of the same order of magnitude (lower than the chosen tolerance). 

Another possibility is replacing the stopping criterion \eqref{eq:update_mtilde} with a different one, relying on the quantity \eqref{eq:mean}. In detail, we could choose to monitor the approximation error using the value in \eqref{eq:mean} and asking that the number of selected points $\tilde{m}$ is the smallest $m$ such that
\[
    \max_{\omega \in \Omega} \left|p_m(\omega) - f(\omega) \right| \leq \mbox{tol},
\]
where $p_m$ is the interpolating polynomial of $f(\lambda)$ at the points $\left\lbrace\mu_{j}\right\rbrace_{j=1}^m$. We consider this possibility in the first iteration of Example \ref{ex:different_tau}, with $\tau=1$, and compare the result with the proposal in \eqref{eq:update_mtilde}. Figure \ref{fig:2strategies} provides a comparison of these two strategies. In this Example, the behavior is similar, suggesting the choice $m=19$ both for the stopping criterion in \eqref{eq:update_mtilde} and the one in \eqref{eq:mean}.

\begin{figure}[h!]
    \centering
\begin{tikzpicture}
    \begin{axis}[
            legend pos = north west,
            height=6cm,
            xlabel= iterations,
            width=\linewidth,
            xmin=1, xmax=14,
            ]
              \addplot[blue, dashed] table[y index = 1]{M162529_example_delay.dat};
    \addplot[red, dashed] table[y index = 2]{M162529_example_delay.dat};
   \addplot[green, dashed] table[y index = 3]{M162529_example_delay.dat};
 \addplot[only marks, blue, mark size =1pt] table[y index = 1]{M162529_example_delay.dat};
  \addplot[only marks, red, mark size =1pt] table[y index = 2]{M162529_example_delay.dat};
   \legend{\small{Stopping criterion \eqref{eq:update_mtilde} },\small{Stopping criterion \eqref{eq:mean}}, \small{Tol}}
    \end{axis}    
\end{tikzpicture}
    \caption{Additional check on the robustness of the approach, for Example \ref{ex:different_tau}, with $\tau=1$.}
    \label{fig:check_discr}
\end{figure}

\begin{figure}[h!]
    \centering
\begin{tikzpicture}
\begin{semilogyaxis}[
legend pos = north east,
xmin=5, xmax=19,
height= 5cm,
xlabel= number of points $m$,
width=\linewidth,
]
 \addplot[red, dashed] table[y index = 1]{M162529_decaym_new2.dat}; 
     \addplot[blue, dashed] table[y index = 1]{M162529_decayd.dat}; 
     \addplot[only marks, blue, mark size =1pt]table[y index = 1]{M162529_decayd.dat}; 
    \addplot[only marks, red, mark size =1pt]table[y index = 1]{M162529_decaym_new2.dat}; 
    \legend{\small{Stopping criterion \eqref{eq:mean}},\small{Stopping criterion \eqref{eq:update_mtilde}}}
\end{semilogyaxis}
\end{tikzpicture}
\caption{Values of the stopping criteria in \eqref{eq:update_mtilde} and \eqref{eq:mean}, for the first step of the method in Example \ref{ex:different_tau}, with $\tau=1$.}
    \label{fig:2strategies}
\end{figure}

\section*{Conclusions}
We presented a method for the numerical approximation of the distance to singularity for matrix-valued functions. Taking inspiration from the method in \cite{GnazzoGugl}, we propose a method for the construction of a minimization problem relying on a discrete set of points. The possible presence of an infinite number of eigenvalues for a general matrix-valued function $\mathcal{F}(\lambda)$ represents a delicate feature of the problem and our proposal consists in exploiting the maximum modulus principle. One of the highlights of our approach is the possibility to extend it to structured perturbations, with a few changes in the algorithm. Moreover, this approach provides new possibilities for the approximation of the distance to singularity for matrix polynomials, making the computation cheaper to perform.

\section*{Acknowledgements}
We wish to thank two anonymous Referees for their helpful suggestions and constructive remarks. Miryam Gnazzo and Nicola Guglielmi are affiliated to the Italian INdAM-GNCS (Gruppo Nazionale di Calcolo Scientifico). Furthermore, during part of the preparation of this work, Miryam Gnazzo was affiliated with Gran Sasso Science Institute, L'Aquila (Italy). Nicola Guglielmi acknowledges that his research was supported by funds from the Italian 
MUR (Ministero dell'Universit\`a e della Ricerca) within the 
PRIN 2022 Project ``Advanced numerical methods for time dependent parametric partial differential equations with applications''.

\nocite{*}
\bibliographystyle{siamplain}
\bibliography{M162529_biblio}

\end{document}